\theoremstyle{plain}
\newtheorem{theorem}{Theorem}[section]
\newtheorem{prop}{Proposition}[section]
\newtheorem{lemma}{Lemma}[section]
\theoremstyle{remark}
\newtheorem{remark}{Remark}[section]
\newtheorem{exmp}{Example}[section]
\def\oo{{\rm {o}}} 
\def\OO{{\rm {O}}}
\begin{document}

\baselineskip 6truemm

\bigskip
\centerline{\Large\bf Convergence rate for the}
\centerline{\Large\bf longest $T$-contaminated runs of heads.}
\centerline{\Large\bf Paper with detailed proofs}
\vskip0.3truecm

\centerline{
	\renewcommand{\thefootnote}{1}
	Istv\'an Fazekas,\footnotemark 
	\ Borb\'ala Fazekas,
	\ Michael Ochieng Suja
		\footnotetext{fazekas.istvan@inf.unideb.hu}
}
\vskip0.3truecm
\centerline{University of Debrecen, Hungary}
\vskip0.4truecm

\begin{abstract}
We study the length of $T$-contaminated runs of heads in the well-known coin tossing experiment.
A $T$-contaminated run of heads is a sequence of consecutive heads interrupted by $T$ tails.
For $T=1$ and $T=2$ we find the asymptotic distribution for the first hitting time 
of the $T$ contaminated run of heads having length $m$; furthermore, we obtain a limit theorem
for the length of the longest $T$-contaminated head run.
We prove that the rate of the approximation of our accompanying distribution for the length of 
the longest $T$-contaminated head run is considerably better than the previous ones.
For the proof we use a powerful lemma by Cs\'aki, F\"oldes and Koml\'os, see \cite{CFK}.
\end{abstract}
\section{Introduction}
\setcounter{equation}{0}
In this paper, we shall consider the well-known coin tossing experiment.
Let $p$ denote the probability of heads, so the probability of tails is $q=1-p$.
We toss the coin $N$ times.

We shall study the length of consecutive heads interrupted by $T$ tails.
A subsequence containing $T$ tails while all the other values are heads 
will be called a $T$-contaminated run of heads.

The most famous known results concern the length of the pure head runs.
The case of the fair coin was studied in the classical paper of Erd\H{o}s and R\'enyi \cite{ErReny75}.
Later several papers were published on this topic.
E.g. \cite{Novak} studied the accuracy of the approximation to the distribution of the length of the longest head run 
in a Markov chain (see also the references in \cite{Novak}).

An early paper obtaining almost sure limit results for the length of the longest runs 
containing at most $T$ tails is \cite{ErRe75}.
Later several papers were devoted to the topic.
F\"oldes, in \cite{Fol79}, presented asymptotic results for the distribution of the number of 
$T$-contaminated head runs, 
the first hitting time of a $T$-contaminated head run having a fixed length, 
and the length of the longest $T$-contaminated head run.
M\'ori in \cite{Mori93} obtained a so called almost sure limit theorem for the longest $T$-contaminated head run.

In the paper \cite{GSW86}, Gordon, Schilling, and Waterman applied extreme value theory to obtain the 
asymptotic behaviour of the  expectation and the variance of the length of the longest $T$-contaminated head run.
Also in \cite{GSW86}, accompanying distributions were obtained for the length of the longest $T$-contaminated 
head run. 
In \cite{FM}, it was shown that the accompanying distributions of \cite{GSW86} 
can be obtained by the method of F\"oldes \cite{Fol79}.

After minor algebraic manipulation, Theorem 1 of \cite{GSW86} states the following.
\begin{prop} \label{GSW} 
Let $\mu(N)$ denote the length of the longest $T$-contaminated run of heads 
during the coin tossing experiment of length $N$.
Let 
\begin{equation}  \label{m0}
m_0(N) = \log (qN) + T \log(\log(qN)) + T \log (q/p) - \log(T!),
\end{equation}
where $\log$ denotes the logarithm of base $1/p$.
Let $ [ m_0(N) ] $ denote the integer part of $m_0(N)$ and let 
$ \{ m_0(N) \} $ denote the fractional part of $m_0(N)$.
Then
$$
P \left(\mu(N)- [m_0(N)] <k \right) =
\exp \left( -p^{ k-  \{ m_0(N) \}} \right) + \rm{o}(1) .
$$
\end{prop}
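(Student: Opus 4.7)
The plan is to reduce the claim to a Poisson limit for a carefully chosen count of $T$-contaminated windows of length $m := [m_0(N)] + k$. Let $A_i$ denote the event that the window of tosses $\{i, i+1, \dots, i+m-1\}$ contains at most $T$ tails, so that $\{\mu(N) \geq m\}$ coincides with $\bigcup_{i=1}^{N-m+1} A_i$. Because these events overlap strongly, I would switch to the ``left-maximal'' indicators $B_i := I(A_i \setminus A_{i-1})$ for $i \geq 2$, and $B_1 := I(A_1)$; a short case analysis shows that, for $i \geq 2$, $B_i = 1$ is equivalent to requiring position $i-1$ to be a tail, position $i+m-1$ to be a head, and the interval $[i, i+m-2]$ to contain exactly $T$ tails. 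Consequently $P(B_i) = \binom{m-1}{T} p^{m-T} q^{T+1}$, and $W_N := \sum_{i=1}^{N-m+1} B_i$ vanishes precisely on the event $\{\mu(N) < m\}$.

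The next step is to identify the Poisson parameter. Substituting $m = m_0(N) + k - \{m_0(N)\}$ and using that $\log$ is in base $1/p$ gives
\begin{equation*}
  p^{m_0(N)} = (qN)^{-1} (\log(qN))^{-T} (p/q)^T \, T! ,
\end{equation*}
from which
\begin{equation*}
  E W_N \sim N \cdot \frac{m^T}{T!} \cdot p^{m-T} q^{T+1} \longrightarrow p^{k - \{m_0(N)\}} =: \lambda_N ,
\end{equation*}
because $m / \log(qN) \to 1$. So the candidate Poisson parameter matches the exponent in the statement, and it remains to show $P(W_N = 0) - e^{-\lambda_N} \to 0$.

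For the latter I would apply the Chen--Stein Poisson approximation with dependence neighbourhoods $\Gamma_i := \{j : |i - j| \leq m\}$. Since $B_i$ is a function of the tosses $i-1, \dots, i+m-1$, it is independent of $(B_j)_{j \notin \Gamma_i}$, so the Chen--Stein coupling term vanishes. The contribution
\begin{equation*}
  b_1 = \sum_{i} \sum_{j \in \Gamma_i} P(B_i) P(B_j) = O(m \lambda_N^2 / N) = O((\log N)/N)
\end{equation*}
is immediately $o(1)$. The main obstacle is the second term
\begin{equation*}
  b_2 = \sum_{i} \sum_{j \in \Gamma_i \setminus \{i\}} P(B_i \cap B_j) ,
\end{equation*}
for which the trivial bound $P(B_i \cap B_j) \leq P(B_i)$ only gives $b_2 = O(m \lambda_N) = O(\log N)$. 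Here one has to exploit the combinatorial constraint that $B_i \cap B_{i+d}$ forces at least $T+2$ tails on the merged window of length $m+d+1$ (two of them pinned at positions $i-1$ and $i+d-1$), producing an extra $p^d$-type factor relative to $P(B_i) P(B_{i+d})$. A careful inclusion-exclusion on the joint tail pattern, summed over $d \in \{1, \dots, m\}$, should yield $b_2 = o(1)$; at that point Chen--Stein delivers $|P(W_N = 0) - e^{-E W_N}| \to 0$ and the proposition follows. This sharp overlap estimate is precisely the sort of bound that the Cs\'aki--F\"oldes--Koml\'os lemma advertised in the abstract is designed to deliver.
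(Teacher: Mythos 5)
You should first be aware that the paper never proves this proposition: it is quoted (after algebraic manipulation) from Theorem~1 of Gordon--Schilling--Waterman, and the paper's own machinery --- the Cs\'aki--F\"oldes--Koml\'os blocking lemma applied to the events ``exactly $T$ tails in a window of length $m$'', with the declumping constant $\alpha\approx q-T/m$ computed explicitly in Lemmas~\ref{lemT1kmSI} and \ref{lemT2kmSI} --- is deployed for the sharper Theorem~\ref{propRun} and, per the remark following it, recovers this proposition with rate $\OO(\log\log N/\log N)$. Your Chen--Stein route with left-maximal indicators is therefore a genuinely different (and in principle viable) approach, and most of its setup is correct: the characterization of $B_i$ (tail at $i-1$, head at $i+m-1$, exactly $T$ tails in $[i,i+m-2]$), the formula $P(B_i)=\binom{m-1}{T}p^{m-T}q^{T+1}$, the identification of $\{W_N=0\}$ with $\{\mu(N)<m\}$ (up to boundary events of negligible probability), the computation $EW_N\to p^{k-\{m_0(N)\}}$, the vanishing of the coupling term, and $b_1=\OO(\log N/N)$ all check out.

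The genuine gap is in $b_2$, which you correctly single out as the crux but do not close, and the estimate you sketch is not the one that works. You assert an ``extra $p^d$-type factor relative to $P(B_i)P(B_{i+d})$'', i.e.\ $P(B_iB_{i+d})\lesssim p^dP(B_i)P(B_{i+d})$; this is false. For $T=1$ one computes directly that $P(B_iB_{i+d})=(d-1)q^3p^{m+d-2}$, whereas $p^dP(B_i)P(B_{i+d})\asymp m^2q^4p^{2m+d-2}$, which is smaller by a factor of order $m/(dN)$ since $p^m\asymp 1/(Nm)$; so the claimed bound understates the joint probability by an unbounded factor. If instead you meant $P(B_iB_{i+d})\le Cp^dP(B_i)$, that inequality is true but insufficient: it only yields $b_2\le CNP(B_1)\sum_{d\ge1}p^d=\OO(\lambda_N)=\OO(1)$, not $\oo(1)$. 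The bound that is both true and sufficient is
\begin{equation*}
P(B_iB_{i+d})\;\lesssim\;\frac{d\,p^{d}}{m}\,P(B_i),\qquad 1\le d\le m,
\end{equation*}
where the crucial extra factor $1/m$ arises because the pinned tail at position $i+d-1$ lies inside $[i,i+m-2]$ and therefore consumes one of the $T$ free tail positions of $B_i$, reducing the combinatorial count from $\binom{m-1}{T}$ to order $m^{T-1}$; summing over $d$ then gives $b_2=\OO\left(NP(B_1)/m\right)=\OO(1/\log N)$. Both ingredients (the geometric decay in $d$ and the loss of one power of $m$) appear in your sketch, but they are never assembled into a correct inequality, so as written the argument does not establish $b_2=\oo(1)$ and the Poisson approximation step remains unproved.
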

However, numerical experiments show that the approximation offered by Proposition \ref{GSW} is quite weak.
Therefore the aim of this paper is to improve the above result for the most important cases of $T=1$ and $T=2$.

Our main result is Theorem \ref{propRun}.
Like Proposition \ref{GSW}, our theorem offers an accompanying distribution sequence for the distribution of the 
centralized version of $\mu(N)$.
We show that the rate of the approximation in our new theorem is $\OO\left(1/(\log(N))^2\right)$.
We shall see, that for $T=1$ and $T=2$ the rate of the approximation in Proposition \ref{GSW} is 
$\OO\left(\log(\log(N))/\log(N)\right)$, so our result considerably improves the former result.
We also obtain a result for the first hitting time of the $T$ contaminated run of heads having length $m$, see
Theorem  \ref{propT12km}.
In section \ref{SectSimu} we present some simulation results supporting that our new approximation 
excels the former ones.

The main tool of the proof is a powerful lemma by Cs\'aki, F\"oldes and Koml\'os, see \cite{CFK}.
We quote this lemma as Lemma \ref{MLsf}.
In order to apply this lemma, we need several elementary but quite long calculations, see lemmas 
\ref{lemT1kmSI}, \ref{lemT2kmSI}.
\section{The first hitting time and the longest run}
\setcounter{equation}{0}
Consider the well-known coin tossing experiment.
Let $p$ be the probability of heads and let $q=1-p$ be the probability of tails.
Here $p$ is a fixed number with $0 < p < 1$.
We toss a coin $N$ times independently.
As usual, we shall write $1$ for heads and $0$ for tails.
So we shall consider independent identically distributed random variables $X_1, X_2, \dots ,X_N$
with $P(X_i = 1) = p$ and $P(X_i = 0) = q$, $i=1, 2, \dots ,N$.

Let $T\ge 0$ be a fixed non-negative integer.
We shall study the $T$-interrupted runs of heads.
It means that there are $T$ zeros in an $m$ length sequence of ones and zeros.
So let $m$ be a positive integer.
Let denote $A_n=A_{n,m}$ the event that there are precisely $T$ zeros in the sequence 
$X_n, X_{n+1}, \dots, X_{n+m-1}$.
So $A_{n,m}$ is the event that there is a $T$-interrupted run of ones at positions $n, {n+1}, \dots, {n+m-1}$.

Let $\tau_m$ be the first hitting time of the $T$-contaminated run of heads having length $m$.
We shall find the asymptotic distribution of $\tau_m$ as $m\to \infty $ for $T=1$ and for $T=2$.
\begin{theorem}  \label{propT12km}
	Let $T=1$ or $T=2$, $0<p<1$.
	Let $\tau_m$ be the first hitting time of the $T$ contaminated run of heads having length $m$.
	Then, for $x> 0$, 
	$$
	P( \tau_m  \alpha P(A_1) > x) \sim e^{-x}
	$$
	as $m\to \infty$.
	Here if $T=1$, then $\alpha= q + \frac{2 p^{m-1} -1}{m}$ and $P(A_1) = m  p^{m-1} q$.
	When $T=2$, then $\alpha= q - \frac{2}{m}$, $ P(A_1) = {m\choose 2}p^{m-2} q^2$.
\end{theorem}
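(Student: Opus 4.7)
The plan is to cast $P(\tau_m > n)$ as the probability that none of the events in the stationary family $A_1,\dots,A_n$ occurs and then apply the Cs\'aki--F\"oldes--Koml\'os lemma (Lemma~\ref{MLsf}), which under suitable short-range dependence approximates such a probability by $(1-s_m)^n$ for an effective rate $s_m$ built from the single-event probability $P(A_1)$ and the overlap probabilities $P(A_1\cap A_{1+k})$.

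Concretely, I would set $n = n(m,x) := \lfloor x/(\alpha P(A_1))\rfloor$. Since $P(A_1)\to 0$ in both regimes, $n\to\infty$ and $n\alpha P(A_1)\to x$. Because the length-$m$ windows defining $A_j$ and $A_{j+k}$ share no positions for $k\ge m$, the events are then independent, so the dependence structure is fully encoded by the overlaps for $k=1,\dots,m-1$. Conditioning on $A_1$ and sliding the window by $k$, the change in the zero-count is $\sum_{i=m+1}^{m+k}[X_i=0] - \sum_{i=1}^{k}[X_i=0]$, and this must vanish for $A_{1+k}$ to hold; enumerating the configurations in which the total number of zeros stays equal to $T$ is elementary binomial bookkeeping, and it is exactly the content of Lemmas~\ref{lemT1kmSI} and~\ref{lemT2kmSI}.

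Assembling these inputs, the CFK lemma yields $s_m = \alpha P(A_1)(1+\oo(1))$ with the prescribed $\alpha$, so $(1-s_m)^n \to e^{-x}$ by the standard exponential limit. This directly gives $P(\tau_m > x/(\alpha P(A_1))) \to e^{-x}$, which is the claim.

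The main obstacle will be the precise bookkeeping in the second step: identifying $\alpha$ together with its $\OO(1/m)$ correction, rather than only the leading $q$, is what eventually drives the sharper rate announced in the main theorem of the paper, and the combinatorial structure of this correction is genuinely different for $T=1$ (a single zero can slide either in or out of the window) and $T=2$ (two zeros can be distributed across the overlap in several ways). This is why the cases are separated into two distinct technical lemmas; once those estimates are in hand, both the application of Lemma~\ref{MLsf} and the final exponential limit are routine.
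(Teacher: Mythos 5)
Your proposal is correct and follows essentially the same route as the paper: identify $\{\tau_m>n\}$ with the non-occurrence of $A_1,\dots,A_{n-m+1}$, verify conditions (SI)--(SIII) via the technical lemmas, apply Lemma~\ref{MLsf} with $n=\lfloor x/(\alpha P(A_1))\rfloor$, and pass to the exponential limit. Your phrasing of the CFK output as $(1-s_m)^n$ rather than the two-sided bound $e^{-(\alpha\pm10\varepsilon)NP(A_1)\pm 2mP(A_1)}$ is only a cosmetic difference, and your implicit assumptions (that $\varepsilon$ can be taken $\to 0$ and that the boundary term $mP(A_1)\to 0$) are exactly what the paper checks.
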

\begin{remark}\label{remT2}
One can show, that Theorem \ref{propT12km} is valid for $T=2$ 
	with $\alpha= q - \frac{2}{m} + \frac{2(m-2)}{m} p^{m-2} - \frac{2(m-4)}{m} p^{m-1} $, too (see Remark \ref{remT2kmSI}).
\end{remark}

Now, we turn to the length of the longest contaminated run of heads.
\begin{theorem}  \label{propRun}
	Let $T=1$ or $T=2$, and let $0<p<1$ be fixed.
	Let $\mu(N)$ be the length of the longest $T$ contaminated run of heads during tossing $N$ times a coin.
Let 
\begin{eqnarray}  \label{mN}
m(N) &=& \log(qN) + T\log(\log(qN)) + \\
\nonumber
&+&  T^2 \frac{\log(\log(qN))}{c \log(qN)} -  \frac{T}{c q_0 \log(qN)} 
     - \frac{T^3}{2 c} \left( \frac{\log(\log(qN) )}{\log(qN)} \right)^2  + \\
\nonumber
&+& T^2 \frac{\log(\log(qN))}{cq_0 (\log(qN))^2} + T^3 \frac{\log(\log(qN))}{(c \log(qN))^2} + \\
\nonumber
&+& \left(T \log\left(\frac{q}{p}\right) -\log (T!)\right) 
\left(1 + \frac{T}{c \log(qN)} - T^2 \frac{\log(\log(qN))}{c (\log(qN))^2}  \right) , 
\end{eqnarray} 
where $\log $ denotes the logarithm of base $1/p$,
$c = \ln(1/p)$, $\ln$ denotes the natural logarithm of base $e$, and   $q_0= \frac{2q}{2+Tq-q}$.
Let $[m(N)]$ denote the integer part of $m(N)$, while $\{m(N)\}$ denotes the fractional part of $m(N)$,
i.e. $\{m(N)\} = m(N)- [m(N)]$.

Then 
\begin{eqnarray} \label{mulim}
 && P(\mu(N) - [m(N)] < k ) =  \\
&& \qquad \qquad  = e^{\displaystyle{- p^{(k- \{m(N)\})
\left( 1-\frac{T}{c \log(qN)} +T^2 \frac{\log(\log(qN))}{c (\log(qN))^2} \right)}}} 
\left (1+ \OO\left( \frac{1}{(\log N)^2} \right) \right) \nonumber
\end{eqnarray}
for any integer $k$, 
where $f(N)=\OO(h(N))$ means that $f(N)/h(N)$ is bounded as $N \to \infty$.	
\end{theorem}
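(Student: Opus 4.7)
The plan is to reduce the statement to a sharp estimate of the first hitting time via the duality between $\mu(N)$ and $\tau_m$, and then apply the Csáki–Földes–Komlós lemma in its quantitative form. Writing $\{\mu(N) < m\} = \bigcap_{n=1}^{N-m+1} A_{n,m}^c$, one has $P(\mu(N) - [m(N)] < k) = P(\bigcap_{n=1}^{N-m+1} A_{n,m}^c)$ with $m = [m(N)] + k$, so it suffices to estimate this intersection probability sharply.

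Step 1 is to apply Lemma \ref{MLsf} to the events $A_{n,m}$. This produces a representation of the form $P(\bigcap_n A_{n,m}^c) = \exp(-N\alpha P(A_1))(1 + R(N,m))$, where $\alpha$ is the same correction coefficient that appears in Theorem \ref{propT12km} and $R(N,m)$ is quantitatively controlled by the short-range intersection sums $\sum_n P(A_1 \cap A_n)$. The derivation parallels the proof of Theorem \ref{propT12km}, but now the error term must be tracked explicitly; the key fact is that for $m$ of order $\log(qN)$ one has $N\alpha P(A_1) = \OO(1)$ and $R(N,m) = \OO(1/(\log N)^2)$.

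Step 2, and the heart of the matter, is to establish the algebraic identity
$$
N\alpha P(A_1) = p^{(k - \{m(N)\})\bigl(1 - T/(c\log(qN)) + T^2\log(\log(qN))/(c(\log(qN))^2)\bigr)} \bigl(1 + \OO(1/(\log N)^2)\bigr)
$$
when $m = [m(N)]+k$ and $m(N)$ is given by \eqref{mN}. Using $P(A_1) = \binom{m}{T} p^{m-T} q^T$ together with the formula for $\alpha$ (for $T=2$ one profitably uses the refined $\alpha$ of Remark \ref{remT2}), I would take logarithms in base $1/p$ and expand in powers of $1/\log(qN)$. Each successive correction term in \eqref{mN} is chosen to cancel one specific residual: the leading $\log(qN) + T\log(\log(qN)) + T\log(q/p) - \log(T!)$ matches the classical Erdős–Rényi scale already appearing in Proposition \ref{GSW}; the term $T^2 \log(\log(qN))/(c\log(qN))$ cancels the leading $\log\log/\log$ error coming from $\binom{m}{T}$; the term $T/(c q_0 \log(qN))$ cancels the contribution of $\alpha - q$; and the higher-order entries in \eqref{mN} kill the remaining $\OO(1/\log N)$ terms so that only $\OO(1/(\log N)^2)$ survives. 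This is presumably the content of the technical lemmas \ref{lemT1kmSI} and \ref{lemT2kmSI} advertised in the introduction.

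The main obstacle is precisely this bookkeeping: each of the six correction terms in \eqref{mN} has to be placed against a specific Taylor remainder, the cases $T=1$ and $T=2$ are structurally similar but numerically distinct (the constant $q_0 = 2q/(2+Tq-q)$ encodes this $T$-dependence at the second-order level), and a single lost $\log(\log N)/\log N$ factor anywhere would degrade the rate. Once Step 2 is in hand, the passage from an $\OO(1/(\log N)^2)$ multiplicative error in the exponent to the stated $1 + \OO(1/(\log N)^2)$ factor outside the exponential is routine, since $e^{-x}$ is Lipschitz on bounded sets and $p^{k - \{m(N)\}}$ remains bounded in $N$ for each fixed $k$.
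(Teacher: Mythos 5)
Your proposal follows essentially the same route as the paper: the duality $\{\mu(N)<m\}=\bigcap_n \bar A_{n,m}$, the quantitative Csáki--Földes--Komlós bound \eqref{MLsfe} with $\alpha=q-T/m$ (the short-range error terms there are in fact $\OO\left((\log N)^{2T+1}/N\right)$, well inside your claimed $\OO\left(1/(\log N)^2\right)$), and then a base-$1/p$ logarithmic expansion of $N\alpha P(A_1)$ in which each correction term of \eqref{mN} is matched against a specific Taylor remainder --- exactly the computation of $L$ in the paper's proof. The one caveat is that your Step 2, which is where all the actual work lies, is described rather than carried out, and it is not the content of Lemmas \ref{lemT1kmSI}--\ref{lemT2kmSI} (those only establish condition (SI) with $\alpha=q-T/m+\OO(p^m)$); but the plan itself is the paper's plan.
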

\begin{remark}
Using our method, for $T=1$ and $T=2$ and for $m_0(N)$ from \eqref{m0},
we obtain that the rate of convergence in Proposition \ref{GSW} is $\OO\left(\log(\log(N))/\log(N)\right)$, that is 
$$
P \left(\mu(N)- [m_0(N)] <k \right) =
\exp \left( -p^{ k-  \{ m_0(N) \}} \right) \left(1+ \OO\left(\log(\log(N))/\log(N)\right) \right) .
$$
So our Theorem  considerably improves Theorem 1 of \cite{GSW86} in the cases of $T=1$ and $T=2$.
\end{remark}
\section{Preliminary Lemmas}
\setcounter{equation}{0}
The following lemma of \cite{CFK} will play a fundamental role in our proofs.
\begin{lemma}	\label{MLsf}
	(Main lemma, stationary case, finite form of Cs\'aki, F\"oldes, Koml\'os \cite{CFK}.)
	Let $X_1,X_2,\dots$ be any sequence of independent random variables, 
and let $\mathcal{F}_{n,m}$ be the $\sigma$-algebra generated by the random variables $X_n,X_{n+1},\dots,X_{n+m-1}$. 
Let $m$ be fixed and let $A_n=A_{n,m}\in\mathcal{F}_{n,m}$. 
Assume that the sequence of events $A_n = A_{n,m}$ is stationary, that is
$P(A_{i_1+d} A_{i_2+d} \cdots A_{i_k+d})$ is independent of $d$.
	 
	Assume that there is a fixed  number $\alpha$, $0<\alpha \le 1$, 
	such that the following three conditions hold for some fixed $k$ with $2 \le k \le m$, 
	and fixed $\varepsilon$ with $0< \varepsilon < \min\{ p/10, 1/42\}$ 
	\begin{enumerate}[(SI)]
		\item 
		\begin{equation*}
			|P(\bar{A_2} \cdots \bar{A_k}| A_1)-\alpha| <\varepsilon ,
		\end{equation*} 
		\item
		\begin{equation*}
			\sum_{k+1 \le i \le 2m} P(A_i|A_1) <\varepsilon ,
		\end{equation*}
		\item
		\begin{equation*}
			P(A_1) < \varepsilon/m .
		\end{equation*}
	\end{enumerate}
	Then, for all $N > 1$,
	\begin{equation*}
		\left|\frac{P(\bar{A_2} \cdots \bar{A}_N|A_1)}{P(\bar{A_2} \cdots \bar{A}_N)} - \alpha \right| <7 \varepsilon
	\end{equation*}
	and 
	\begin{equation} \label{MLsfe}
		e^{-(\alpha+10\varepsilon)NP(A_1)-2mP(A_1)} < P(\bar{A_1} \cdots \bar{A}_N) <e^{-(\alpha-10\varepsilon)NP(A_1)+2mP(A_1)}.
	\end{equation}
\end{lemma}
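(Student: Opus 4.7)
The plan is to prove Lemma~\ref{MLsf} in two main stages. First, I would establish the ratio bound $\bigl|P(\bar{A_2} \cdots \bar{A_N}|A_1)/P(\bar{A_2} \cdots \bar{A_N}) - \alpha\bigr| < 7\varepsilon$. Second, using the elementary identity $q_N = q_{N-1} - P(A_1)\,\beta_N$, where $q_N = P(\bar{A_1} \cdots \bar{A_N})$ and $\beta_N = P(\bar{A_2} \cdots \bar{A_N}|A_1)$ (this follows from $P(\bar{A_2} \cdots \bar{A_N}) = q_{N-1}$, valid by stationarity, by decomposing according to $A_1$ versus $\bar{A_1}$), I would convert the ratio bound into the exponential estimate \eqref{MLsfe} by telescoping.

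For the ratio bound, the key structural input is $m$-dependence: because $A_n \in \mathcal{F}_{n,m}$ and the $X_i$ are independent, $A_1$ is independent of the $\sigma$-algebra $\sigma(X_{m+1}, X_{m+2}, \dots)$ and hence of $\sigma(A_{m+1}, A_{m+2}, \dots)$. My first step is to verify $\bigl|P(\bar{A_2} \cdots \bar{A_{2m}}|A_1) - \alpha\bigr| \le 2\varepsilon$ by writing
\[
P(\bar{A_2} \cdots \bar{A_{2m}}|A_1) = P(\bar{A_2} \cdots \bar{A_k}|A_1) - P\bigl(\bar{A_2} \cdots \bar{A_k} \cap (A_{k+1} \cup \cdots \cup A_{2m}) \,\big|\, A_1\bigr),
\]
applying (SI)(1) to the first term and the bound $\sum_{i=k+1}^{2m} P(A_i|A_1) < \varepsilon$ from (SI)(2) to the second. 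Next I would decompose
\[
P(\bar{A_2} \cdots \bar{A_N}|A_1) = P(\bar{A_2} \cdots \bar{A_{2m}}|A_1) \cdot P(\bar{A_{2m+1}} \cdots \bar{A_N} \,|\, A_1, \bar{A_2} \cdots \bar{A_{2m}})
\]
and argue, using the above independence of $A_1$ together with (SI)(2) applied in shifted forms to absorb the coupling produced by the variables $X_{m+1}, \dots, X_{3m-1}$ shared between the conditioning event and $A_{2m+1}$, that the second factor is within $\OO(\varepsilon)$ of its $A_1$-free version $P(\bar{A_{2m+1}} \cdots \bar{A_N} \,|\, \bar{A_2} \cdots \bar{A_{2m}}) = P(\bar{A_2} \cdots \bar{A_N})/P(\bar{A_2} \cdots \bar{A_{2m}})$. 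Since $P(\bar{A_2} \cdots \bar{A_{2m}}) = 1 + \OO(\varepsilon)$ by (SI)(3), collecting the multiplicative errors yields the claimed $7\varepsilon$ bound.

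The exponential estimate then follows by iterating the recursion $q_N/q_{N-1} = 1 - P(A_1)\beta_N/q_{N-1} = 1 - (\alpha + \eta_N)P(A_1)$ with $|\eta_N| \le 7\varepsilon$ whenever the ratio bound applies. Taking logarithms and using $-\log(1-x) = x + \OO(x^2)$ with $x = (\alpha+\eta_N)P(A_1) \le 2\varepsilon/m$ (so the quadratic corrections summed over $N$ terms are of order $NP(A_1)^2$, absorbed into the $10\varepsilon$ slack by (SI)(3)), together with a trivial estimate on the first $\OO(m)$ factors for which the ratio bound is not yet available and which therefore contribute the boundary term $e^{\pm 2mP(A_1)}$, one arrives at \eqref{MLsfe}.

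The main obstacle will be the second half of the ratio bound, namely the estimate on $P(\bar{A_{2m+1}} \cdots \bar{A_N}\,|\,A_1, \bar{A_2} \cdots \bar{A_{2m}})$: although $A_1$ alone is independent of the far-field events $A_{2m+1}, A_{2m+2}, \dots$, the intermediate conditioning on $\bar{A_{m+2}} \cdots \bar{A_{2m}}$ perturbs the distribution of the shared variables $X_{m+1}, \dots, X_{3m-1}$ and hence of $A_{2m+1}$ itself. The fact that (SI)(2) is summed all the way up to index $2m$ rather than only to $m$ is precisely tuned to absorb this perturbation, and the careful bookkeeping needed to pin down the exact constants $7\varepsilon$ and $10\varepsilon$ (rather than some larger universal constants) is where the bulk of the technical labor will lie.
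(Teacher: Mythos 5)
First, a point of comparison that matters: the paper contains no proof of Lemma~\ref{MLsf}. It is quoted verbatim from Cs\'aki, F\"oldes and Koml\'os \cite{CFK} and used as a black box, so there is no in-paper argument to measure your proposal against. Judged on its own, your skeleton is the standard one and does match the strategy of \cite{CFK}: the identity $q_N=q_{N-1}-P(A_1)\beta_N$ with $q_N=P(\bar{A}_1\cdots\bar{A}_N)$ and $\beta_N=P(\bar{A}_2\cdots\bar{A}_N\mid A_1)$ is correct (via stationarity and splitting on $A_1$ versus $\bar{A}_1$), the preliminary estimate $|P(\bar{A}_2\cdots\bar{A}_{2m}\mid A_1)-\alpha|<2\varepsilon$ from (SI)--(SII) is correct, and the passage from the ratio bound to \eqref{MLsfe} by telescoping and $-\ln(1-x)=x+\OO(x^2)$ is routine given (SIII).

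The gap is exactly the step you flag as the ``main obstacle,'' and it is not just a matter of pinning down constants. Whichever decomposition you use --- your conditional one, or the cleaner sandwich based on $P(A_1\bar{A}_2\cdots\bar{A}_k\,\bar{A}_{2m+1}\cdots\bar{A}_N)=P(A_1\bar{A}_2\cdots\bar{A}_k)\,P(\bar{A}_{2m+1}\cdots\bar{A}_N)$, which exploits that $A_1\bar{A}_2\cdots\bar{A}_k$ lives on $X_1,\dots,X_{2m-1}$ and is therefore genuinely independent of $\bar{A}_{2m+1}\cdots\bar{A}_N$ --- the discarded events $\bar{A}_{k+1},\dots,\bar{A}_{2m}$ cost an \emph{additive} error of order $\varepsilon P(A_1)$ by (SII), and the index shift forces a comparison of $P(\bar{A}_{2m+1}\cdots\bar{A}_N)$ with $P(\bar{A}_2\cdots\bar{A}_N)$. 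To convert either of these into a \emph{relative} error of size $\OO(\varepsilon)$ against $P(A_1)P(\bar{A}_2\cdots\bar{A}_N)$ --- a quantity that may be exponentially small in $N$ --- you need the multiplicative control $q_{j-1}/q_j\le\left(1-(\alpha+7\varepsilon)P(A_1)\right)^{-1}$ for the $\OO(m)$ indices $j$ between $N-2m$ and $N$, and that control is available only if the ratio estimate has already been established at those smaller indices. In other words, the two conclusions of the lemma cannot be proved in the sequential order you propose (ratio bound for all $N$ first, telescoping second); they must be established jointly by induction on $N$, with each step of the ratio bound consuming the recursion at earlier indices. Your sketch never invokes such an induction, and without it the central comparison does not close; your fallback claim that (SII) ``summed up to $2m$'' absorbs the perturbation addresses only the additive cost, not the normalization by a possibly exponentially small $P(\bar{A}_2\cdots\bar{A}_N)$.
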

First we present some preliminary results which can have their own interest.
We shall check conditions (SI)-(SIII) of Lemma \ref{MLsf} if $k=m$.
Next remarks show that conditions (SII) and (SIII) are true for any $T$ if $m$ is large enough.

\begin{remark}  \label{RemSIII}
	Consider condition (SIII).
	We show that (SIII) is true for any $T$ if $m$ is large enough.
	We have
	\begin{equation*}
		P(A_1) = {m \choose T} p^{m-T} q^T \le \frac{m^T}{T!} p^{m-T} q^T < \frac{\varepsilon}{m},
	\end{equation*}
	if
	\begin{equation}  \label{SIII}
		m^{T+1}  p^m   < \varepsilon \left(\frac{p}{q}\right)^T T!,
	\end{equation}
	and the last inequality is satisfied for any fixed positive $\varepsilon$ if $m$ is large enough.
	So we always can assume (SIII) of Lemma \ref{MLsf} for any $T$.
\end{remark}

\begin{remark}  \label{RemSII}
	Consider condition (SII).
	\begin{equation*}
		P(A_i|A_1)=P(A_i) = {m \choose T} p^{m-T} q^T \le \frac{m^T}{T!} p^{m-T} q^T ,
	\end{equation*}
	if $i>m$ because of independence.
	So
	\begin{equation*}
		\sum_{i=m+1}^{2m}  P(A_i|A_1)= m P(A_1) = m {m \choose T} p^{m-T} q^T \le  m\frac{m^T}{T!} p^{m-T} q^T < \varepsilon,
	\end{equation*}
	therefore we obtain again condition \eqref{SIII}.
	So condition (SII) is true if $m$ is large enough.
\end{remark}

To check condition (SI), first we fix $T=1$.

\begin{lemma}  \label{lemT1kmSI}
	Condition {\rm{(SI)}} of Lemma \ref{MLsf} is satisfied for $T=1$ and $k=m$ in the following form
	\begin{equation}  \label{condSI}
		|P(\bar{A_2} \cdots \bar{A_k}| A_1)-\alpha| <\varepsilon ,
	\end{equation} 
	with $\alpha= q + \frac{2 p^{m-1} -1}{m}$.
\end{lemma}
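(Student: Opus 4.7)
The plan is to condition on the position of the unique zero in the first window. Given $A_1$, the single tail occurs at some position $J\in\{1,2,\dots,m\}$, and by symmetry (given that there is exactly one zero among $X_1,\dots,X_m$) each value of $J$ has conditional probability $1/m$. Writing
\begin{equation*}
P(\bar A_2\cdots\bar A_m\mid A_1)=\frac{1}{m}\sum_{j=1}^{m}P\bigl(\bar A_2\cdots\bar A_m\mid A_1,J=j\bigr),
\end{equation*}
the task reduces to evaluating each conditional probability explicitly and then summing. Because $A_i$ involves positions $i,i+1,\dots,i+m-1$ for $2\le i\le m$, and the first block $\{1,\dots,m\}$ is completely determined under the conditioning, each $A_i$ becomes an event depending only on the fresh variables $X_{m+1},\dots,X_{2m-1}$.

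I would split into three cases. If $j=1$, then $X_2=\cdots=X_m=1$, so $A_i$ asks for exactly one zero among $X_{m+1},\dots,X_{i+m-1}$; a short induction on the running count of zeros shows that $\bar A_2\cdots\bar A_m$ is equivalent to $X_{m+1}=\cdots=X_{2m-1}=1$, giving probability $p^{m-1}$. If $j=m$, the zero sits in every window $[i,i+m-1]$ with $i\le m$, so $\bar A_i$ just says that at least one of $X_{m+1},\dots,X_{i+m-1}$ is zero, and the $i=2$ condition $X_{m+1}=0$ forces all the rest; this yields $q$. For $2\le j\le m-1$ the argument splits again: for $2\le i\le j$ the original zero is still inside the window (so $A_i$ requires the new positions to be all heads), forcing $X_{m+1}=0$; for $j+1\le i\le m$ the original zero has fallen out, so conditional on $X_{m+1}=0$ we need at least one additional tail among $X_{m+2},\dots,X_{i+m-1}$. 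The tightest such constraint comes from $i=j+1$, a range of length $j-1$, and all larger $i$ follow automatically by monotonicity. Hence the probability is $q(1-p^{j-1})$.

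Assembling the three cases gives
\begin{equation*}
P(\bar A_2\cdots\bar A_m\mid A_1)=\frac{1}{m}\Bigl[p^{m-1}+\sum_{j=2}^{m-1}q(1-p^{j-1})+q\Bigr],
\end{equation*}
and the geometric sum $q\sum_{j=1}^{m-2}p^{j}=p-p^{m-1}$ collapses this expression to $q+\frac{2p^{m-1}-1}{m}=\alpha$ exactly. Consequently the left-hand side of \eqref{condSI} vanishes, which is trivially below any positive $\varepsilon$.

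The only real obstacle is keeping the case analysis clean: one must be careful about which $\bar A_i$'s are the binding constraints, since for $j$ in the interior two different kinds of constraints compete (force the first new position to be a tail versus force an additional tail somewhere in a shorter window), and one has to verify the nesting/monotonicity that lets us keep only the tightest ones. Once that bookkeeping is done, the rest is a one-line geometric simplification, and we even obtain equality rather than an $\varepsilon$-approximation.
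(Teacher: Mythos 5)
Your proposal is correct and takes essentially the same route as the paper: the paper likewise partitions $A_1\bar A_2\cdots\bar A_m$ according to the position of the unique tail in the first window (first position, interior position $l$ with binding constraints $X_{m+1}=0$ plus at least one further tail among $X_{m+2},\dots,X_{m+l}$, and last position), obtaining $P(A_1\bar A_2\cdots\bar A_m)=qp^{m-1}\bigl(p^{m-1}+q\sum_{i=1}^{m-2}(1-p^i)+q\bigr)$ and dividing by $P(A_1)=mp^{m-1}q$, which is exactly your conditioning on $J$ written as a joint probability. Your case analysis, binding-constraint identification, and the resulting exact identity $P(\bar A_2\cdots\bar A_m\mid A_1)=q+\frac{2p^{m-1}-1}{m}$ all match the paper's proof.
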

\begin{proof} Fix $T=1$ and $k=m$.
	To calculate the probability of the event $A_1 \bar{A_2} \cdots \bar{A_k}$, we divide it into parts.
	
	Consider those $0-1$ sequences $X_1, X_2, \dots, X_{2m-1}$ which belong to $A_1 \bar{A_2} \cdots \bar{A_k}$.
	If the first member of the sequence is $0$, then the members on the places $m+1,\dots, 2m-1$ should be ones.
	So this part of $A_1 \bar{A_2} \cdots \bar{A_k}$ has probability $q p^{m-1} p^{m-1} $.
	
	If the first member is $1$, then  $X_{m+1}$ should be zero.
	If we fix that the only zero in $X_1, X_2, \dots, X_{m}$ is at the $l$th place with $2\le l \le m-1$, 
	then besides $X_{m+1}=0$
	there should be at least one zero among $X_{m+2}, \dots, X_{m+l}$.
	Its probability is $q p^{m-1} q (1-p^{l-1}) $.
	Finally, if the only zero in $X_1, X_2, \dots, X_{m}$ is at the $m$th place, then we need only $X_{m+1}=0$.
	Its probability is $q p^{m-1} q $. 
	Therefore
	\begin{equation*}
		P( A_1 \bar{A_2} \cdots \bar{A_k}) = q p^{m-1} \left( p^{m-1} +q \sum_{i=1}^{m-2} (1-p^i) +q\right)
	\end{equation*}
	and 
	\begin{equation*}
		P( \bar{A_2} \cdots \bar{A_k} | A_1) = q + \frac{2 p^{m-1} -1}{m}.
	\end{equation*}
\end{proof}		

\begin{remark}
	We shall use the following known formulae
	\begin{equation*}
		\sum_{i=a}^{b}ix^{i-1} = \frac{bx^{b+1} -(b+1)x^b -(a-1)x^a + ax^{a-1}}{(x-1)^2}
	\end{equation*}
and
\begin{align*}
	\sum_{i=a}^{b}i(i-1)x^{i-2} & = \frac{b(b+1)x^b -a(a-1)x^{a-1} -b(b+1)x^{b-1} +a(a-1)x^{a-2}}{(x-1)^2} -\\
	&-\frac{2\left[bx^{b+1} -(a-1)x^a -(b+1)x^b +ax^{a-1}\right]}{(x-1)^3} .
\end{align*}
\end{remark}

\begin{lemma}\label{lemT2kmSI}
	Condition {\rm{(SI)}} of Lemma \ref{MLsf} is satisfied for $T=2$ and $k=m$ in the following form
	
	\begin{equation}  \label{condSI}
		|P(\bar{A_2}\bar{A_3} \cdots \bar{A}_m| A_1)-\alpha| <\varepsilon ,
	\end{equation} 
	with $\alpha= q - \frac{2}{m} + \OO(p^m)$
	as $m\to \infty$.
\end{lemma}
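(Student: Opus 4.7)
Plan of proof. Following the strategy of Lemma \ref{lemT1kmSI}, I would condition on $A_1$: the block $X_1,\dots,X_m$ contains exactly two zeros at some positions $1\le i<j\le m$, so
\[
P(\bar{A}_2\cdots\bar{A}_m\mid A_1)=\binom{m}{2}^{-1}\sum_{1\le i<j\le m}P_{i,j},
\]
where $P_{i,j}$ denotes the probability that the tail variables $Y_l:=X_{m+l}$, $l=1,\dots,m-1$, satisfy the constraints imposed by $\bar{A}_2,\dots,\bar{A}_m$ for the given zero positions $(i,j)$. Writing $S_k$ for the number of zeros among $Y_1,\dots,Y_k$, a window-by-window count shows that $\bar{A}_n$ demands $S_{n-1}\ge 1$ if $n\le i$, $S_{n-1}\ne 1$ if $i<n\le j$, and $S_{n-1}\ne 2$ if $n>j$, according as the first block supplies $2$, $1$ or $0$ zeros to the window $\{n,\dots,n+m-1\}$.

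For the generic range $2\le i<j\le m-1$, monotonicity of $S_k$ collapses these $m-1$ conditions to just three: $Y_1=0$, $S_i\ge 2$, and $S_j\ge 3$. A direct computation on the independent block counts $Z_1$ (zeros in $Y_2,\dots,Y_i$) and $Z_2$ (zeros in $Y_{i+1},\dots,Y_j$), using $P(Z_1=0)=p^{i-1}$ and $P(Z_1=1,Z_2=0)=(i-1)qp^{j-2}$, gives $P_{i,j}=q[1-p^{i-1}-(i-1)qp^{j-2}]$. The two boundary strata require separate treatment: if $i=1$ the first constraint flips to $Y_1=1$ and propagates by the same monotonicity to $Y_1=\cdots=Y_{j-1}=1$ with at most one zero in $Y_j,\dots,Y_{m-1}$, giving $P_{1,j}=p^{m-1}+(m-j)qp^{m-2}$, whose total over $j$ is only $\OO(m^2p^{m-2})$; if $j=m$ the $S_j\ge 3$ condition is vacuous and $P_{i,m}=q[1-p^{i-1}]$.

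The final step is to add $\sum_{i<j}P_{i,j}$ with the help of the finite geometric and arithmetic--geometric identities recorded just before the lemma. The constant piece of the generic formula contributes $q\binom{m-2}{2}$ and the $j=m$ stratum adds $q(m-2)-p+p^{m-1}$; the two correction series coming from $-qp^{i-1}$ and $-(i-1)q^2p^{j-2}$ combine, their $\pm p/q$ constants cancelling, to produce $-(m-2)p$ modulo exponentially small terms. Assembling these, the total equals $\binom{m}{2}(q-2/m)$ up to an error of order $m^2p^{m-2}$, so after dividing by $\binom{m}{2}$ one obtains $\alpha=q-2/m+\OO(p^m)$ as claimed. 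The main hurdle is the combinatorial bookkeeping --- verifying the monotonicity collapse inside every sub-case and arranging the finite sums so that their $\OO(1)$ remainders cancel cleanly; the probabilistic content beyond Lemma \ref{lemT1kmSI} is minimal.
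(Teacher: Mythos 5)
Your proposal is correct and follows essentially the same route as the paper: both decompose $A_1\bar{A}_2\cdots\bar{A}_m$ according to the positions of the two zeros in the first window, translate each $\bar{A}_n$ into a constraint on the subsequent tosses, and sum the resulting geometric and arithmetic--geometric series. Your bookkeeping via the monotone zero-count $S_k$ and inclusion--exclusion merely merges the paper's five sub-cases (I, II/1, II/211, II/212, II/22) into three strata, and your total $\binom{m}{2}\left(q-\frac{2}{m}\right)+\OO\left(m^2p^{m-2}\right)$ agrees with the paper's computation of $P(A)+P(B)+P(C)+P(D)$.
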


\begin{proof}
	Fix $T=2$ and let $q= 1-p$. We write 1 for heads and 0 for tails.
	\begin{equation*}
	P(\bar{A_2}\bar{A_3} \cdots \bar{A}_m| A_1) = \frac{P(A_1\bar{A_2} \cdots \bar{A}_m)}{P(A_1)} .
	\end{equation*}
Here
$ P(A_1) = {m\choose 2}p^{m-2} q^2$.

To calculate $P(A_1\bar{A_2} \cdots \bar{A}_m)$, we divide the event $A_1\bar{A_2} \cdots \bar{A}_m$ into parts.

I.	If the first element is 0, then the $(m+1)$\textsuperscript{st} should be 1.\\
	$\underbrace{0, 1, \cdots ,1,\overset{(k)}{0}, 1, \cdots}_{m} \underbrace{1,1,\cdots,1}_{k-1}, \cdots$ \\
	
	So the probability of this part is
	\begin{align*}
	{} &\sum_{k=2}^{m-1}q^2p^{m-2} \left(p^{m-1} + p^{m-2}q(m-k)\right) + q^2 p^{m-2}. p^{m-1}  \\
			= & (m-1) q^2 p^{2m-3} + \frac{(m-1)(m-2)}{2}q^3 p^{2m-4} .
	\end{align*}
This term is 'small', i.e it is of order $\OO(p^m)\cdot P(A_1)$.

II. Now let us turn to the case, when the first element is 1. 
Then the $(m+1)$\textsuperscript{st} element should be 0. 
Let the $k$\textsuperscript{th} and the $l$\textsuperscript{th} elements be zeros, $1 < k < l \le m$.\\

$\underbrace{1,\cdots,1,\overset{(k)}{0},1, \cdots,1,\overset{(l)}{0},\cdots}_{m},\underbrace{\underbrace{\underbrace{0,\cdots, 0, \cdots}_{k},\cdots}_{l}, \cdots}_{m-1}$\\

Then on places $m+2, \cdots, m+k$, there should be at least one 0 and on places $m+2, \cdots, m+l$ , 
there should be at least two zeros.

%
II/1. If $k=2$, then on the places $m+1, m+2$ should stay zeros. 
	Moreover, when $l\neq m$, there should be at least one 0 at places $m+3, \cdots, m+l$.
	
	$\underbrace{1, \overset{(2)}{0},1, \cdots,1,\overset{(l)}{0},\cdots}_{m}\underbrace{\underbrace{0,0,\cdots}_{l}
	\cdots}_{m}$
	
	 So the probability of this part is
	\begin{equation*}
		P(A)= \sum_{l=3}^{m-1} p^{m-2}q^2\cdot q^2(1-p^{l-2}) + p^{m-2}q^2q^2.
	\end{equation*}

II/2. 	Now, let us consider the case $k > 2$.
	We shall study separately the case $l < m$ and the case of $l = m$. 
	The first case is divided into two parts: $B$ and $C$, say.

II/211.	Let $k > 2$, $l < m$ and on the places $m+2, \cdots, m+k$, there are at least two 0's.\\
	$\underbrace{1,\cdots1,\overset{(k)}{0},\cdots,\overset{(l)}{0}, \cdots,1}_{m},\underbrace{0, \cdots,0,\cdots,0, \cdots}_{k}\cdots$ 
	
	The probability of this part is:\\
	\begin{equation*}
		P(B)= \sum_{k=3}^{m-2} p^{m-2}q^2 (m-k-1) \cdot q(1-p^{k-1}-(k-1) \cdot p^{k-2}q) .
	\end{equation*}

II/212.	Let $k > 2$, $l < m$ and on the places $m+2, \cdots, m+k$, there is precisely one 0. 
	Moreover, on the places $m+k +1, \cdots, m+l$ there is at least one 0. \\
	$\underbrace{1, \cdots,1,\overset{(k)}{0},\cdots,\cdots, \overset{(l)}{0}, \cdots,1}_{m},\underbrace{\underbrace{0,  \cdots,0, \cdots}_{k},\cdots,\cdots,0,\cdots}_{l}\cdots$
	
	The probability of this part is:\\
	\begin{equation*}
		P(C) = \sum_{k=3}^{m-2} p^{m-2}q^2 \sum_{l= k+1}^{m-1}q (k-1) q p^{k-2}(1-p^{l-k}) .
	\end{equation*}
	 

II/22.	Let $k > 2$ and $l = m$. Then on the places $m+2, \cdots, m+k$, there should be at least one 0.\\
	$\underbrace{1, \cdots\cdots, \overset{(k)}{0},\cdots\cdots, \overset{(l)}{0}}_{m},\underbrace{0,\cdots\cdots,0}_{k}$\\
	
	The probability of this part is:	
	\begin{equation*}
		P(D) = \sum_{k=3}^{m-1} q^2 p^{m-2}q (1- p^{k-1}) .
	\end{equation*}	

Now, we reshape the above expression of $P(C)$.
	\begin{align*}
		P(C)={} & \sum_{k=3}^{m-2}p^{m-2} q^4 \Big[(m-k-1)(k-1)p^{k-2}-(k-1)\underbrace{\sum_{l=k+1}^{m-1} p^{l-2}}_{\frac{p^{k-1}-p^{m-2}}{q}}\Big]\\
		= &  p^{m-2}q^4 \left[\sum_{k=3}^{m-2}(m-3)(k-1) p^{k-2}- p \sum_{k=3}^{m-2}(k-2)(k-1) p^{k-3} -\right. \\		
		& \left.- \frac{p}{q} \sum_{k=3}^{m-2}(k-1) p^{k-2} + \sum_{k=3}^{m-2}(k-1) \frac{p^{m-2}}{q}\right]\\
		= & p^{m-2}q^4 \left[C_1 + C_2 + C_3 + C_4\right] .
	\end{align*}
Here
\begin{align*}
	C_1 = {} & (m-3) \sum_{i=2}^{m-3} i p^{i-1}\\
	= & (m-3) \frac{(m-3) p^{m-2} - (m-2) p^{m-3} - p^2 + 2p}{q^2} ,
\end{align*}
\begin{align*}
	C_2 = {} & -p\sum_{j= 2}^{m-3}(j-1) j p^{j-2}\\
	= & -p \left[\frac{(m-3)(m-2) p^{m-3} - 2p -(m-3)(m-2) p^{m-4} +2}{q^2} - \right. \\
	&- \left. 2 \frac{(m-3) p^{m-2} - p^2 - (m-2) p^{m-3} + 2p}{-q^3}\right] ,
\end{align*}
\begin{align*}
	C_3 = {} & -\frac{p}{q} \sum_{k=2}^{m-3} k p^{k-1}\\
	= & - \frac{p}{q}\frac{(m-3)p^{m-2} - (m-2) p^{m-3} - p^2 + 2 p}{q^2} ,
\end{align*}
\begin{align*}
	C_4 = {} \frac{p^{m-2}}{q} \frac{(m-1)(m-4)}{2} .
\end{align*}

From these, and omitting the 'small' (i.e. $\OO(p^m)$) terms, we obtain
\begin{equation*}
	\frac{P(C)}{P(A_1)} = \frac{p^{m-2}q^4 \left(C_1 + C_2 +C_3 + C_4\right)}{{m \choose 2} p^{m-2} q^2}\\
	\sim \frac{1}{{m \choose 2}} \left[p(1+q)(m-3) - \frac{p}{q}(3-q^2)\right] .
\end{equation*}
Then,
\begin{align*}
	P(A) = {} & (m-2) p^{m-2}q^4 - p^{m-2} q^4\left(p+p^2+ \cdots + p^{m-3}\right)\\
	= & (m-2) p^{m-2}q^4 - p^{m-2}q^4 p \frac{p^{m-3} -1}{p-1} .
\end{align*}
Therefore,
\begin{equation*}
	\frac{P(A)}{P(A_1)} \sim \frac{(m-2)q^2 -pq}{{m \choose 2}} .
\end{equation*}

Then 
$$
P(D) = (m-3)q^3 p^{m-2} - q^3 p^{m-2} p^2 \frac{p^{m-3}-1}{p-1}.
$$
Therefore
\begin{equation*}
	\frac{P(D)}{P(A_1)}\sim \frac{(m-3)q - p^2}{{m \choose 2}} .
\end{equation*}

Now we turn to $B$.
\begin{align*}
	P(B) = {}& \sum_{k=3}^{m-2} p^{m-2}q^3 m \left[1-p^{k-1} - (k-1) p^{k-2} q\right] \\
	- & p^{m-2}q^3 \sum_{k=3}^{m-2}(k+1)\left[1-p^{k-1}-(k-1)p^{k-2}q\right]\\
	= & V_1 + V_2 + V_3 + V_4 + V_5 + V_6,
\end{align*}
where
\begin{equation*}
	V_1 = mp^{m-2}q^3(m-4),
\end{equation*}
\begin{equation*}
	V_2 =-mp^{m-2}q^3
\left(p^2+p^3+ \cdots + p^{m-3}\right)=
 mp^{m-2}q^3 p^2 \left(\frac{p^{m-4}}{q}- \frac{1}{q}\right),
\end{equation*}
 \begin{equation*}
 V_3 = -mp^{m-2}q^4 \sum_{k=2}^{m-3}k p^{k-1} = -mp^{m-2}q^4 \left[\frac{(m-3)p^{m-2}-(m-2)p^{m-3}-p^2 + 2p}{q^2}\right],
\end{equation*}
 \begin{equation*}
 V_4= -p^{m-2}q^3 \sum_{k=3}^{m-2}(k+1) = -p^{m-2}q^3 \frac{(m+3)(m-4)}{2},
\end{equation*}
\begin{equation*}
	V_5 = p^{m-2}q^3\frac{1}{p}\sum_{k=4}^{m-1}k p^{k-1} = p^{m-3}q^3 \frac{(m-1)p^m - mp^{m-1}- 3 p^4 + 4 p^3}{q^2} ,
\end{equation*}
\begin{align*}
	V_6 = {} & p^{m-2}q^4 \sum_{k=3}^{m-2}k(k-1)p^{k-2} + p^{m-2}q^4 \sum_{k=2}^{m-3}kp^{k-1} \\
	= & p^{m-2}q^4\left[\frac{(m-2)(m-1)p^{m-2} -3\cdot 2 \cdot p^2 -(m-2)(m-1)p^{m-3} + 
	3\cdot 2\cdot p}{q^2} + \right. \\
	 & \left.  + \frac{ [ 2(m-2)p^{m-1} -2p^3 -(m-1)p^{m-2} +3p^2 ]}{q^3} \right.\\
	 & \left. + \frac{(m-3)p^{m-2} -(m-2)p^{m-3} -p^2+ 2p}{q^2}\right] .
  \end{align*}

From here
\begin{align*}
	\frac{P(B)}{P(A_1)} \sim {}&\frac{1}{{m \choose 2}}\left[m(m-4)q - mp^2- m(-p^2 +2p) -q\frac{(m+3)(m-4)}{2} +\right. \\
	& \left. + \frac{1}{qp}\left(4p^3 -3p^4\right) -3\cdot 2 \cdot p^2 + 3 \cdot 2\cdot p - \frac{4p^3}{q} 
	+ 6\frac{p^2}{q} -p^2 +2p \right] \\
	=& \frac{1}{{m \choose 2}}\left[q\frac{(m-4)(m-3)}{2} - 2mp +8p -7p^2 -\frac{7p^3}{q} + 10\frac{p^2}{q}\right] .
\end{align*}

Therefore
\begin{align*}
	P(\bar{A_2}\bar{A_3} \cdots \bar{A}_m &| A_1) \sim {} \frac{P(A) + P(B)+ P(C)+ P(D)}{P(A_1)}\\
=& \frac{1}{{m\choose2}}\left[\left((m-2)q^2 -pq\right) + \left(q\frac{(m-4)(m-3)}{2} - 2mp + 8p - 7p^2 -\right.\right. \\
	& -\left.\left. \frac{7p^3}{q} + 10\frac{p^2}{q}\right) + \left((m-3)p(1+q) - \frac{p}{q}(3-q^2)\right) + 
	\left((m-3)q -p^2\right)\right]\\
	= & q-\frac{2}{m} .
\end{align*}
\end{proof}
\begin{remark}\label{remT2kmSI}
A more careful calculation shows that Lemma \ref{lemT2kmSI} is valid for $T=2$ 
	with $\alpha= q - \frac{2}{m} + \frac{2(m-2)}{m} p^{m-2} - \frac{2(m-4)}{m} p^{m-1}  $, too.
\end{remark}
\section{Proofs of the main results}
\setcounter{equation}{0}
\begin{proof}[Proof of Theorem \ref{propT12km}.]
	We can apply Lemma \ref{MLsf} because its conditions are satisfied due to Remark \ref{RemSIII}, Remark \ref{RemSII},
	 and Lemma \ref{lemT1kmSI}, Lemma \ref{lemT2kmSI}.
	$$
	P( \tau_m  \alpha P(A_1) > x) = P( \tau_m  > N),
	$$
	where $N$ is the integer part of $\frac{x}{\alpha P(A_1)}$.
	In Lemma \ref{MLsf}, we can choose  $10\varepsilon=\varepsilon_0/m$, 
	where $\varepsilon_0$ is a fixed positive number.
	Let  $N_1= N-m+1$.
	So, by \eqref{MLsfe},
	\begin{equation} \label{MLsfe1}
		P( \tau_m  > N)=	P(\bar{A_1} \cdots \bar{A}_{N_1}) \sim 
		e^{-(\alpha \pm 10\varepsilon)N_1 P(A_1)\pm 2mP(A_1)} \sim
	\end{equation}
	\begin{equation*}
		\sim e^{-(\alpha \pm \frac{\varepsilon_0}{m}) \left(\frac{x}{\alpha P(A_1)}-m+1\right) P(A_1) }
		 e^{\pm 2mP(A_1)}  \sim
	\end{equation*}	
\begin{equation*}
		\sim e^{-(\alpha \pm \frac{\varepsilon_0}{m}) \frac{x}{\alpha}}
		e^{-(\alpha \pm \frac{\varepsilon_0}{m}) (-m+1) P(A_1)} e^{ \pm 2mP(A_1)} \sim e^{-x}
	\end{equation*}		
	as $m\to\infty$.
\end{proof}

\begin{proof}[Proof of Theorem \ref{propRun}.]
We shall give the proof for more general setting.
Assume that Lemma \ref{lemT1kmSI} and Lemma \ref{lemT2kmSI} are true for any $T$.
More precisely, we assume that condition {\rm{(SI)}} of of Lemma \ref{MLsf} is satisfied for any positive integer $T$
 and for $k=m$ in the following form	
	\begin{equation} \label{assump}
		|P(\bar{A_2}\bar{A_3} \cdots \bar{A}_m| A_1)-\alpha| <\varepsilon ,
	\end{equation} 
	with $\alpha= q - \frac{T}{m}$,
	where $\varepsilon=  \OO(p^m)$ as $m\to \infty$.

The above assumption will imply Theorem \ref{propRun} for any positive integer $T$.

Now, assumption \eqref{assump} and Remarks  \ref{RemSIII} and \ref{RemSII} imply, that 
Lemma \ref{MLsf} is satisfied with $\alpha = q- T/m$  and 
$$
\varepsilon= C m^{T+1} p^m.
$$
So we shall apply equation \eqref{MLsfe} of Lemma \ref{MLsf}, i.e.
	\begin{equation} \label{MLsfe+}
		e^{-(\alpha+10\varepsilon)NP(A_1)-2mP(A_1)} < P(\bar{A_1} \cdots \bar{A}_N) 
		<e^{-(\alpha-10\varepsilon)NP(A_1)+2mP(A_1)}
	\end{equation}
with the values of $P(A_1) = {m \choose T} p^{m-T} q^T $, $\alpha=q - \frac{T}{m}$ and $\varepsilon= C m^{T+1} p^m$.
We shall apply the above inequality for $m= [m(N)] +k$, where $m(N)$ is from equation \eqref{mN}.
For this $m$, direct calculations show that
\begin{equation}  \label{K12}
0< K_1 \le N m^T p^m \le K_2  <\infty.
\end{equation}
By inequality  \eqref{MLsfe+}, using $N_1= N-m+1$ instead of $N$, we have
\begin{eqnarray}  \label{e0}
P(\mu(N)  < m ) &=& P(\bar{A_1} \cdots \bar{A}_{N_1}) \sim e^{-(\alpha \pm 10\varepsilon)N_1 P(A_1) \mp 2mP(A_1)} = 
\nonumber \\
&=& e^{-(q- T/m) N_1 P(A_1)} e^{-(\pm 10\varepsilon)N_1 P(A_1)} e^{\mp 2mP(A_1)}. 
\end{eqnarray}
We shall show that the second and the third terms in \eqref{e0} converge to $0$, 
so the signs $\pm$ will not affect the result.

As $P(A_1) = {m \choose T} p^{m-T} q^T $, and the magnitude of $m$ is $\log N$, 
so the magnitude of the exponent of the third term in \eqref{e0} is
$$
(\log N)^{T+1} p^{\log N} = (\log N)^{T+1} /N,
$$
which converges to $0$ as $N\to\infty$.
So we can use the approximation $e^x \le 1+ C x$ for small values of $|x|$,
therefore we obtain
\begin{equation}  \label{e3}
e^{\pm 2mP(A_1)} = 1+ \OO\left((\log N)^{T+1} /N \right).
\end{equation}
Similarly, for the second term in \eqref{e0}, we have
\begin{equation}  \label{e2}
e^{-(\pm 10\varepsilon)N_1 P(A_1)} \sim e^{\pm C m^{T+1} p^m N_1 m^T p^m} \sim
e^{\pm C (\log N)^{2T+1} /N} = 1+ \OO\left((\log N)^{2T+1} /N \right).
\end{equation}
%

So, from formulae \eqref{e0}-\eqref{e2}, we obtain that 
\begin{equation} \label{mu1}
P(\mu(N)  < m ) =
e^{-\left(q- \frac{T}{m}\right) N_1 P(A_1) }
\left( 1+ \OO\left(1/(\log N)^2 \right) \right) .
\end{equation}  
As
$$
P(\mu(N) - [m(N)] < k ) = P(\mu(N) < m )
$$
with $m= [m(N)] + k = m(N) +k - \{ m(N) \}$, so we apply \eqref{mu1} for this form of $m$.

Now, the logarithm of the exponent in \eqref{mu1} is
\begin{eqnarray} \nonumber
L &=&\log \left( \left(q- \frac{T}{m}\right) N_1 P(A_1)  \right) =  \\
&=& \log\left(q- \frac{T}{m}\right) +\log N  + \log \left(m(m-1) \cdots (m-T+1) \right) +\log\left(p^m\right) +  \nonumber \\
&& \qquad +  \log((q/p)^T) - \log(T!) + \OO\left(\frac{\log N}{N }\right)=  \nonumber \\
&=& \log\left(q- \frac{T}{m}\right) +\log N  + \log \left(m^T- \frac{T(T-1)}{2} m^{T-1} +\OO(m^{T-2}) \right) -m +
 \nonumber \\
&& \qquad + \log((q/p)^T) - \log(T!) + \OO\left(\frac{\log N}{N }\right) =  \nonumber \\
&=& \log \left(q- \frac{T}{m}\right) +\log N  + \log (m^T) - \frac{\frac{T(T-1)}{2} m^{T-1}}{cm^T} 
+\OO\left(\frac{1}{m^2}\right) -m +  \nonumber \\
&& \qquad +\log((q/p)^T) - \log(T!) + \OO\left(\frac{\log N}{N }\right) =  \nonumber \\
&=& \log q  -\frac{T}{c q m} + \log N + T \log m  - \frac{ \frac{T(T-1)}{2} }{cm}-m + \nonumber  \\
     && \qquad +\log((q/p)^T) - \log(T!)+\OO\left(\frac{1}{(\log N)^2 }\right)= \nonumber \\     
&=& \log(qN) -\frac{T}{c q_0 m} + T \log m -m +\log((q/p)^T) - \log(T!) +\OO\left(\frac{1}{(\log N)^2 }\right) , \nonumber
\end{eqnarray}
where we applied Taylor's expansion of the $\log$ function up to first order and
used notation $q_0= \frac{2q}{2+Tq-q}$.

Using notation
\begin{eqnarray}  \label{D}
D&=& 
\nonumber
     - \frac{T^3}{2 c} \left( \frac{\log(\log(qN) )}{\log(qN)} \right)^2  + 
T^2 \frac{\log(\log(qN))}{cq_0 (\log(q N))^2} + T^3 \frac{\log(\log(qN))}{(c \log(qN))^2} + \\
&+& \left(T \log\left(\frac{q}{p}\right) -\log (T!)\right) 
\left(\frac{T}{c \log(qN)} - T^2 \frac{\log(\log(qN))}{c (\log(qN))^2}  \right) , 
\end{eqnarray} 
\begin{equation} \label{B}
B = T^2 \frac{\log(\log(qN))}{c \log(qN)} -  \frac{T}{c q_0 \log(qN)} +D   
\end{equation} 
and
$$
A=  T\log(\log(qN)) + B,
$$
we have 
$$
m= T \log\left(\frac{q}{p}\right) -\log (T!) + \log(qN) +A + k - \{m(N)\}.
$$
So we obtain that the logarithm of the exponent in \eqref{mu1} is
\begin{eqnarray} \nonumber
L &=& - \frac{T}{c q_0 m} + T \log m -A -k + \{m(N)\} +\OO\left(\frac{1}{(\log N)^2} \right)  
  \nonumber  = \\
&=&     
 - \frac{T}{c q_0 \log (qN) } + T^2 \frac{\log(\log(qN))}{cq_0 (\log(qN))^2} +\nonumber \\
 &+& T \log \big( \log (qN) +T\log(\log(qN)) +B +\log((q/p)^T) - \log(T!) + k - \{ m(N) \} \big) - \nonumber \\
 &-& A - k + \{ m(N) \} +\OO\left(\frac{1}{(\log N)^2 }\right) , \nonumber 
 \end{eqnarray}
 where we used Taylor's expansion for the function $1/x$. 
 Now, by Taylor's expansion for the $\log(x)$ function, we obtain
 \begin{eqnarray}  \nonumber
 L= &-& \frac{T}{c q_0 \log (qN) } + T^2 \frac{\log(\log(qN))}{cq_0 (\log(qN))^2} + T \log(\log (qN)) +\\
 &+& \frac{T \left(T\log(\log(qN)) +B +\log((q/p)^T) - \log(T!) + k - \{ m(N)\} \right) }{c \log (qN)} - \nonumber \\
 &-& \frac{1}{2} \frac{T \left(T\log(\log(qN)) +B +\log((q/p)^T) - \log(T!) + k - \{ m(N)\} \right)^2 }{c (\log(qN))^2} 
 - \nonumber \\
 &-& A - k + \{ m(N) \} +\OO\left(\frac{1}{(\log N)^2 }\right) . \nonumber  
 \end{eqnarray}
 We see that $B$ can be omitted from the quadratic term, and we can apply that $A=  T\log(\log(qN)) + B$,
  so we obtain
 \begin{eqnarray}  \nonumber
 L= &-& \frac{T}{c q_0 \log (qN) } +  \frac{T^2\log(\log(qN))}{cq_0 (\log(qN))^2} +  \frac{T^2 \log(\log(qN))}{c \log(qN)}  \nonumber \\
 &+& \frac{T(\log((q/p)^T) - \log(T!))}{c \log(qN)} +  \frac{T^3\log(\log(qN))}{(c \log(qN))^2} - 
 \frac{T^2}{q_0(c \log(qN))^2} + \frac{T D}{c \log(qN)} + \nonumber \\
 &+& \frac{T(k - \{ m(N)\})}{c \log(qN)} - \frac{1}{2}\frac{T^3(\log(\log(qN)))^2}{c (\log(qN))^2} - \nonumber \\
 &-& \frac{1}{2} \frac{T \left(\log((q/p)^T) - \log(T!) + k - \{ m(N)\} \right)^2 }{c (\log(qN))^2} - \nonumber \\
 &-& \frac{2T}{2} \frac{T\log(\log(qN)) \left(\log((q/p)^T) - \log(T!) + k - \{ m(N)\} \right) }{c (\log(qN))^2} - \nonumber \\
 &-& B - k + \{ m(N) \} +\OO\left(\frac{1}{(\log N)^2 }\right) = \nonumber  \\
 =&&  (k - \{ m(N)\}) \left(\frac{T}{c  \log (qN) } -  \frac{T^2\log(\log(qN))}{c(\log(qN))^2} -1 \right) 
 +\OO\left(\frac{1}{(\log N)^2 }\right) .\nonumber
 \end{eqnarray}
 
So we have by Taylor's expansion
\begin{eqnarray}  \label{e5}
 && e^{-\left(q- \frac{T}{m}\right) N P(A_1)}  =  e^{-p^{-L}} = \\
&=& \exp\left( {- p^{-\left( ( \{ m(N) \} -k) \left(1-\frac{T}{c \log (qN)}
+ \frac{T^2\log(\log(qN))}{c(\log(qN))^2}\right) + 
\OO\left( 1/{(\log N)^2 } \right)\right)} }  \right)=  \nonumber \\
&=& \exp\left({- p^{-\left( ( \{ m(N) \} -k) \left(1-\frac{T}{c \log N} + 
\frac{T^2\log(\log(qN))}{c(\log(qN))^2}  \right)\right)}  
\left(1+\OO\left( {1}/{(\log N)^2 } \right)\right)} \right)  =  \nonumber \\
&=& 
\exp\left({- p^{-\left(( \{ m(N) \} -k) \left(1-\frac{T}{c \log N} + 
\frac{T^2\log(\log(qN))}{c(\log(qN))^2}  \right)\right)} }\right)
\left(1+\OO\left( {1}/{(\log N)^2 } \right)\right) . \nonumber 
\end{eqnarray}
So, from equations \eqref{mu1} and \eqref{e5}, we obtained the desired result.
\end{proof}
\section{Simulation results} \label{SectSimu}
\setcounter{equation}{0}
In this section, first we present simulation results showing the numerical behaviour of 
the first hitting time $\tau_m$. 
These results support Theorem \ref{propT12km}.

Then, we present simulation results for $\mu(N)$, i.e. for the length of the longest $T$ contaminated run.
They show that our new approximation in Theorem \ref{propRun} is better than the former one quoted in 
Proposition \ref{GSW}.
We implemented the simulation in Matlab.
\begin{exmp} \label{TK0}
In this example $p=0.5$, $T=1$.
The length of the coin tossing experiment is $N=10^6$, 
the number of the repetitions of the experiment is $s=2000$.
On parts (a) and (b) of Figure \ref{TK0fig} sign $\oo$ shows the theoretical asymptotic probability and
$\ast$ shows the relative frequency of those experiments when $\mu(N)$, 
that is the longest $T$-contaminated run is shorter than the given value on the horizontal axis.
Part (a) of Figure \ref{TK0fig} shows the fit of the empirical distribution of $\mu(N)$ to the asymptotic distribution
 given by our Theorem \ref{propRun}. The fit is good, the Kolmogorov distance is $0.0264$.
Part (b) of Figure \ref{TK0fig} shows the fit of the empirical distribution of $\mu(N)$ to the asymptotic distribution
 given by the old result quoted in Proposition \ref{GSW}. The fit is poor, the Kolmogorov distance is $0.0778$.
Part (c) of Figure \ref{TK0fig} shows the first hitting time of the $T$-contaminated run having length $n=13$.
The solid line is the empirical distribution given by the simulation, the dashed line is the asymptotic theoretical
 distribution presented in Theorem \ref{propT12km}. The fit is good.
\begin{figure}[h!]
    \centering
    \begin{subfigure}[b]{0.30\textwidth}
        \includegraphics[width=\textwidth]{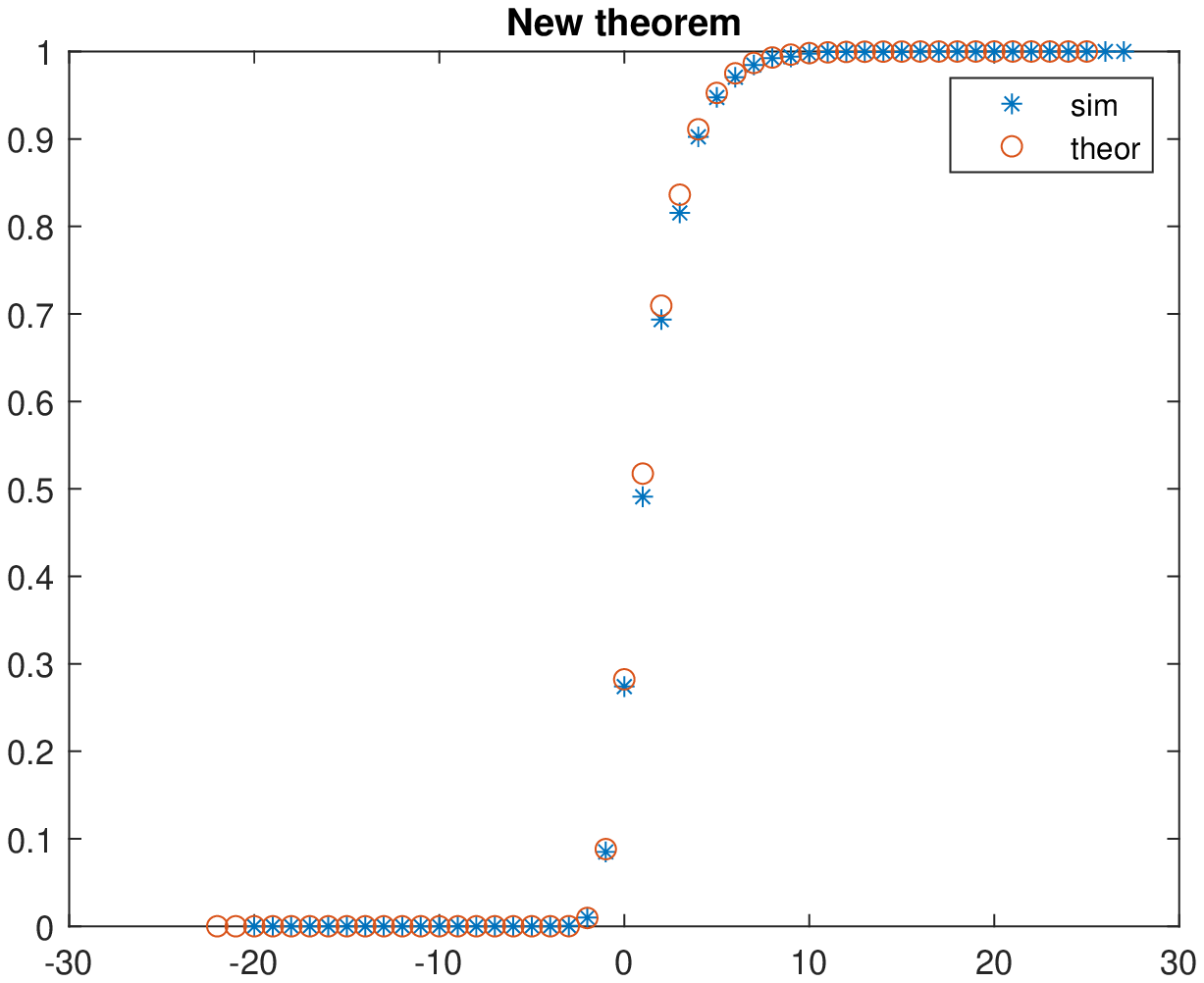}
        \caption*{(a) Longest run (new) }
    \end{subfigure}
    \begin{subfigure}[b]{0.30\textwidth}
        \includegraphics[width=\textwidth]{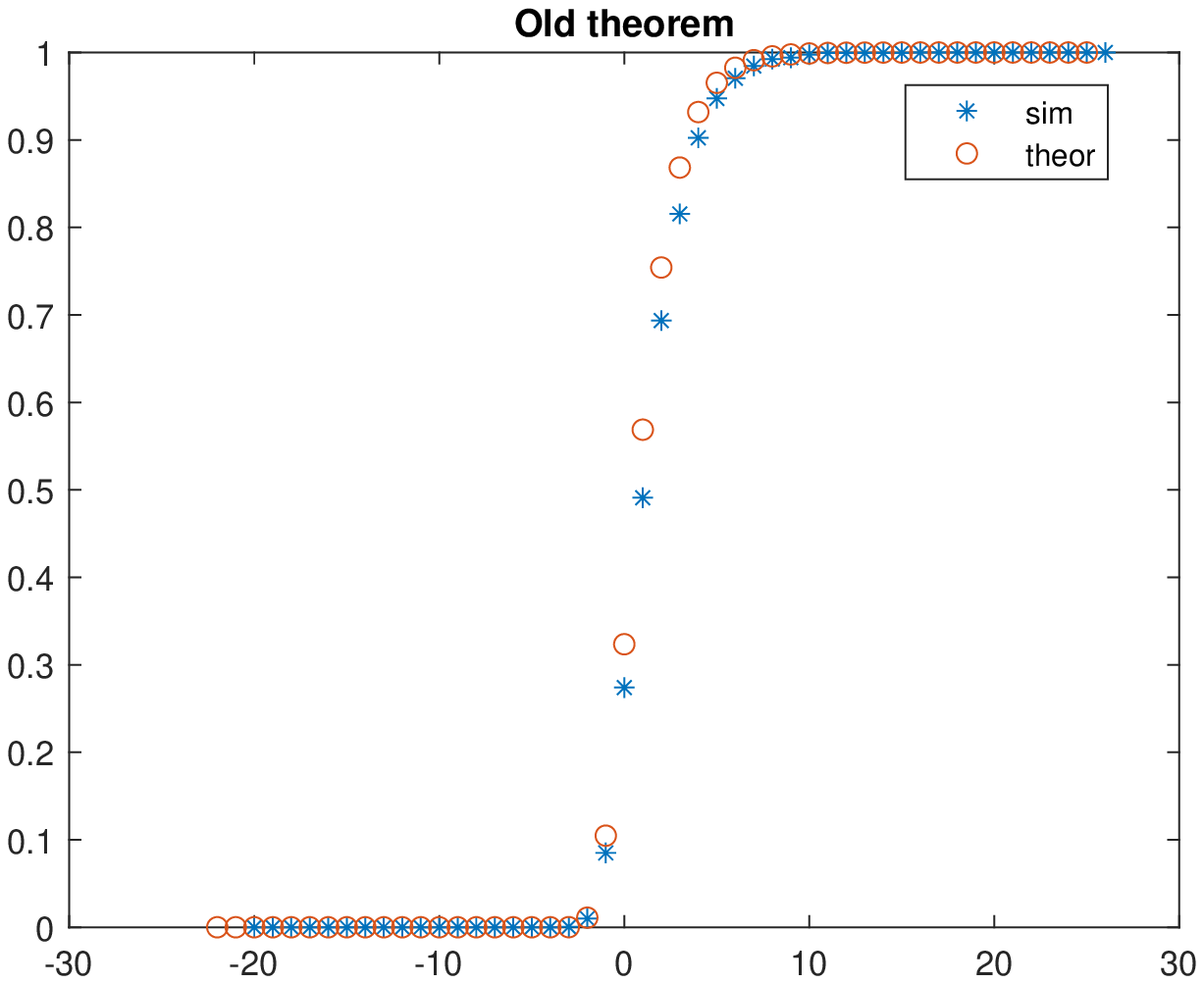}
        \caption*{(b) Longest run (old)}
    \end{subfigure}
    \begin{subfigure}[b]{0.30\textwidth}
        \includegraphics[width=\textwidth]{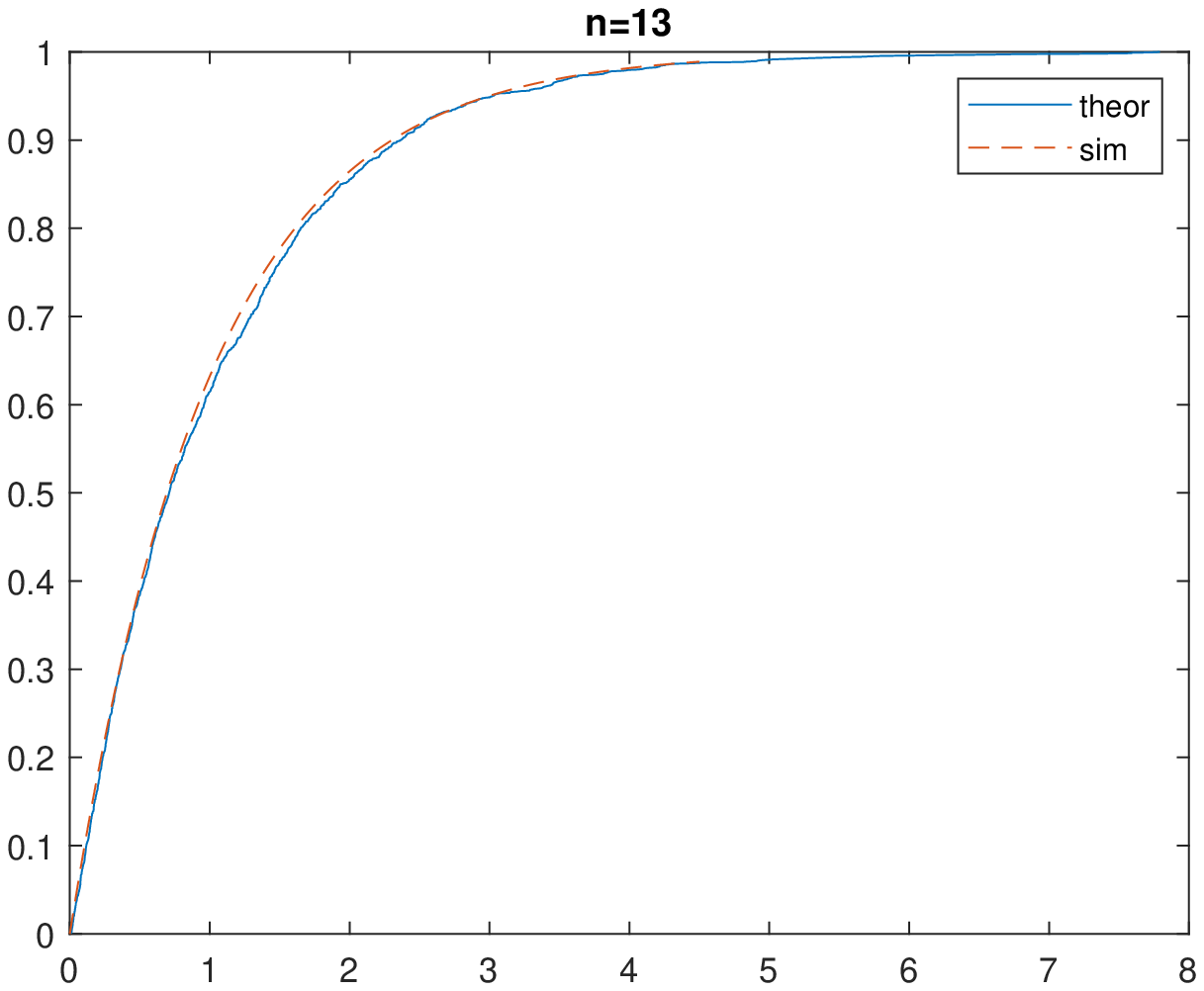}
         \caption*{(c) First hitting time }
    \end{subfigure}
    \caption{$T=1$, $p=0.5$, $N= 10^6$, $s=2000$}\label{TK0fig}
 \end{figure}   
\end{exmp}
\begin{exmp} \label{TK1}
In this example $p=0.5$, $T=2$.
The length of the coin tossing experiment is $N=10^6$, 
the number of the repetitions of the experiment is $s=2000$.
On parts (a) and (b) of Figure \ref{TK1fig} sign $\oo$ shows the theoretical asymptotic probability and
$\ast$ shows the relative frequency of those experiments when $\mu(N)$, 
that is the longest $T$-contaminated run is shorter than the given value on the horizontal axis.
Part (a) of Figure \ref{TK1fig} shows the fit of the empirical distribution of $\mu(N)$ to the asymptotic distribution
 given by our Theorem \ref{propRun}. The fit is good, the Kolmogorov distance is $0.0148$.
Part (b) of Figure \ref{TK1fig} shows the fit of the empirical distribution of $\mu(N)$ to the asymptotic distribution
 given by the old result quoted in Proposition \ref{GSW}. The fit is poor, the Kolmogorov distance is $0.2129$.
Part (c) of Figure \ref{TK1fig} shows the first hitting time of the $T$-contaminated run having length $n=27$.
The solid line is the empirical distribution given by the simulation, the dashed line is the asymptotic theoretical
 distribution presented in Theorem \ref{propT12km}. The fit is good.
\begin{figure}[h!]
    \centering
    \begin{subfigure}[b]{0.30\textwidth}
        \includegraphics[width=\textwidth]{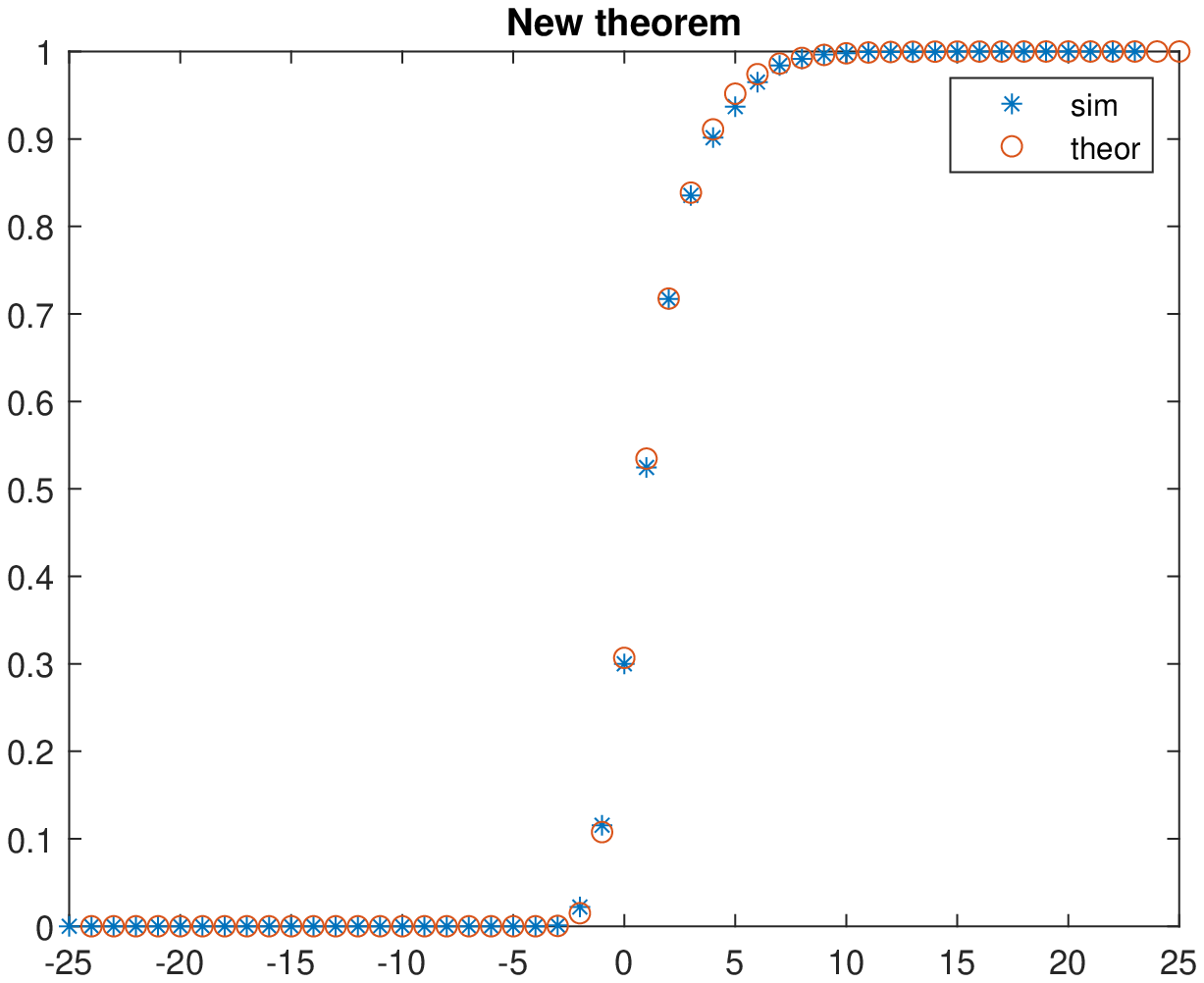}
        \caption*{(a) Longest run (new) }
    \end{subfigure}
    \begin{subfigure}[b]{0.30\textwidth}
        \includegraphics[width=\textwidth]{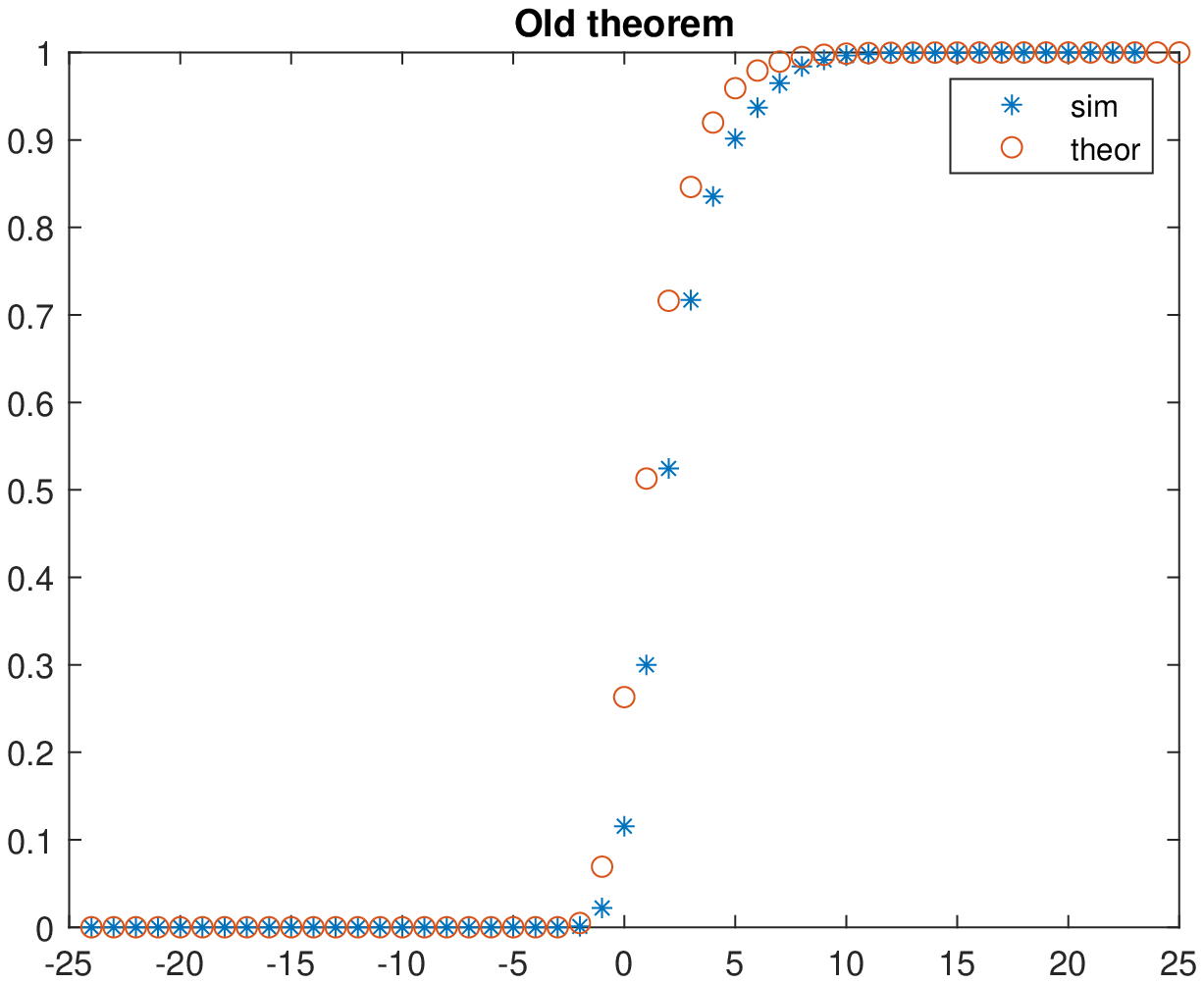}
        \caption*{(b) Longest run (old)}
    \end{subfigure}
    \begin{subfigure}[b]{0.30\textwidth}
        \includegraphics[width=\textwidth]{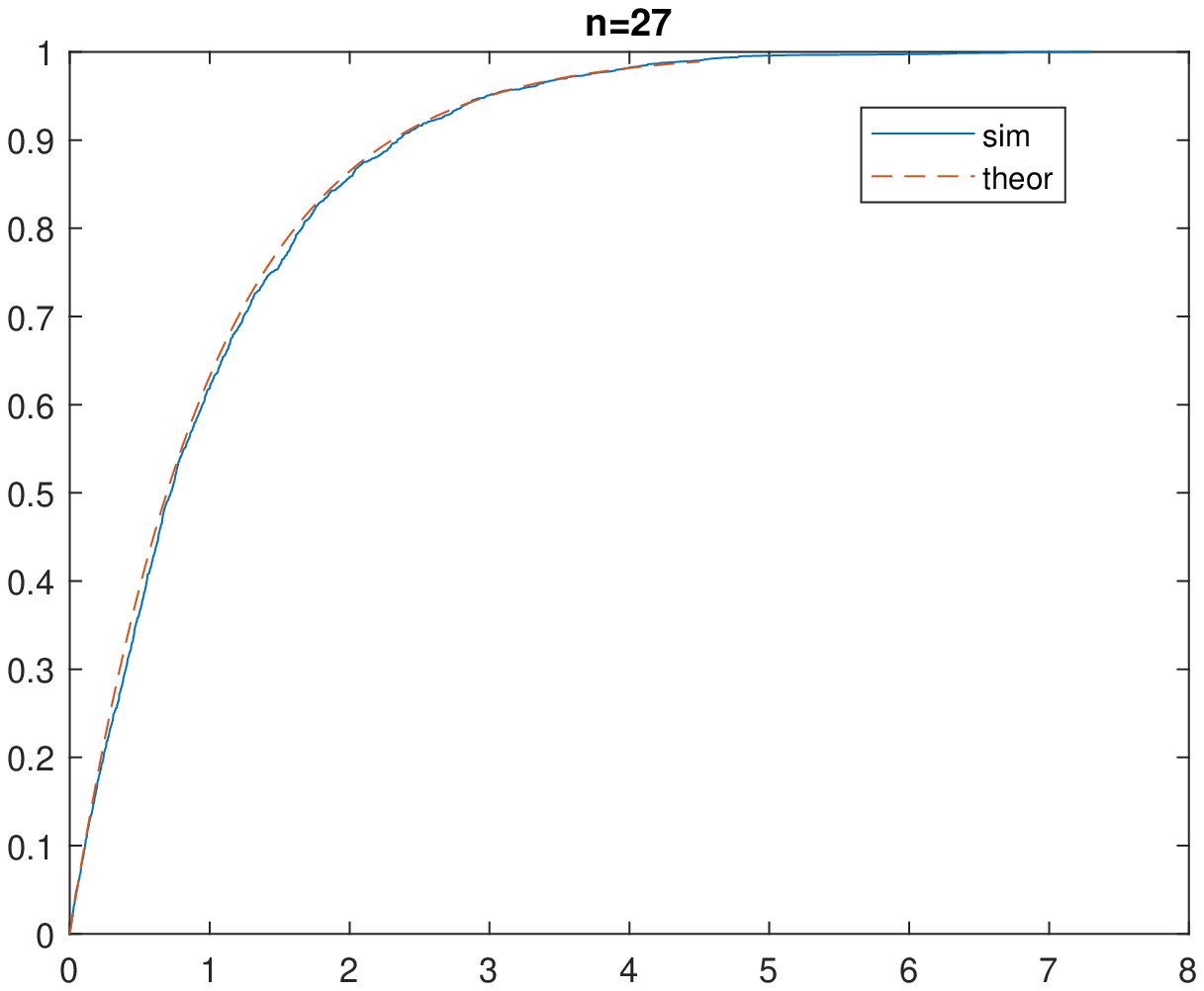}
         \caption*{(c) First hitting time }
    \end{subfigure}
    \caption{$T=2$, $p=0.5$, $N= 10^6$, $s=2000$}\label{TK1fig}
 \end{figure}   
\end{exmp}
\begin{exmp} \label{TK2}
In this example $p=0.6$, $T=2$.
The length of the coin tossing experiment is $N=10^6$, 
the number of the repetitions of the experiment is $s=2000$.
Part (a) of Figure \ref{TK2fig} shows the fit of the empirical distribution of $\mu(N)$ to the asymptotic distribution
 given by our Theorem \ref{propRun}. The fit is good, the Kolmogorov distance is $0.0250$.
Part (b) of Figure \ref{TK2fig} shows the fit of the empirical distribution of $\mu(N)$ to the asymptotic distribution
 given by the old result quoted in Proposition \ref{GSW}. The fit is poor, the Kolmogorov distance is $0.1953$.
Part (c) of Figure \ref{TK3fig} shows the first hitting time of the $T$-contaminated run having length $n=27$.
The solid line is the empirical distribution given by the simulation, the dashed line is the asymptotic theoretical
 distribution presented in Theorem \ref{propT12km}. The fit is good.
\begin{figure}[h!]
    \centering
    \begin{subfigure}[b]{0.30\textwidth}
        \includegraphics[width=\textwidth]{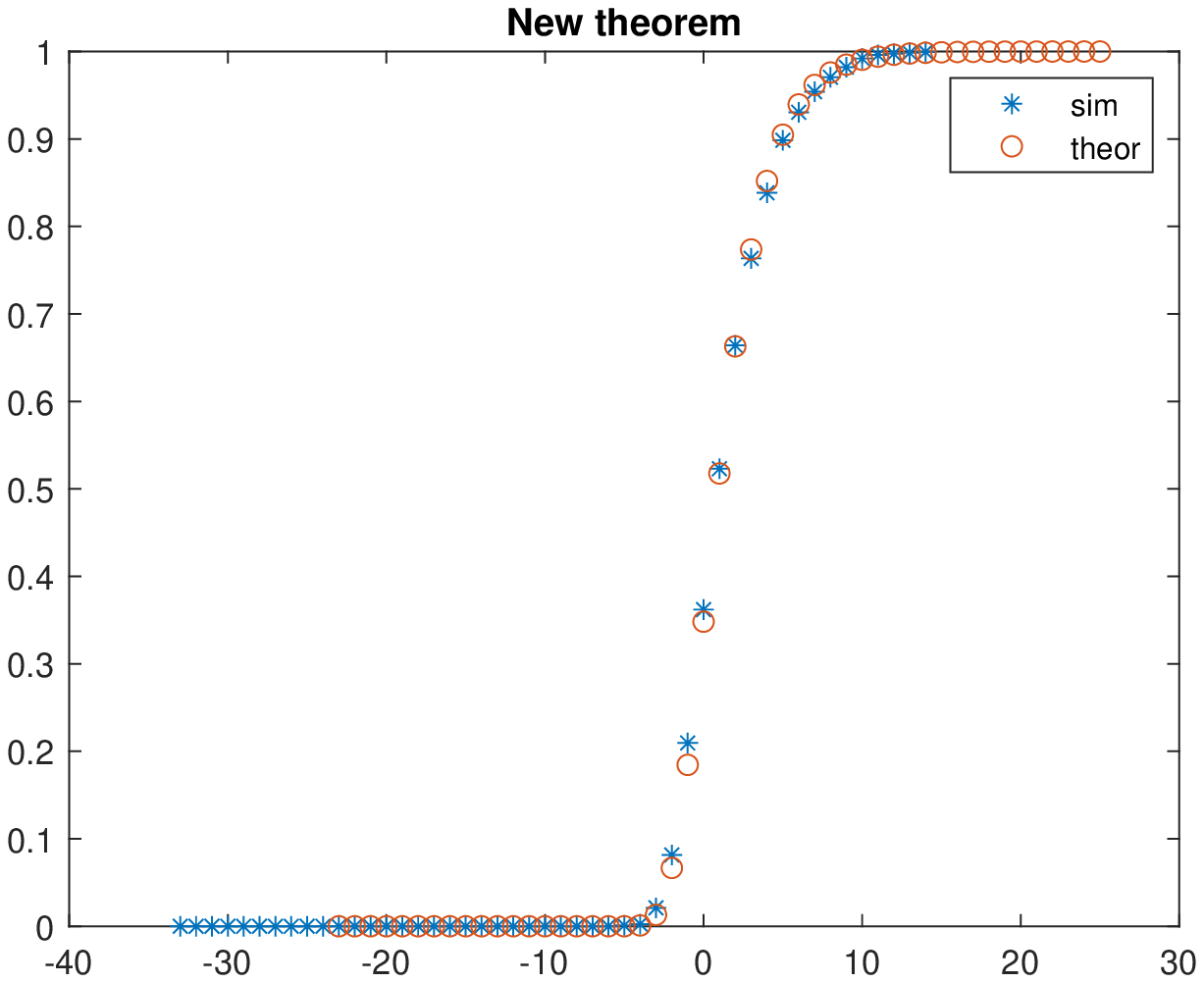}
        \caption*{(a) Longest run (new) }
    \end{subfigure}
    \begin{subfigure}[b]{0.30\textwidth}
        \includegraphics[width=\textwidth]{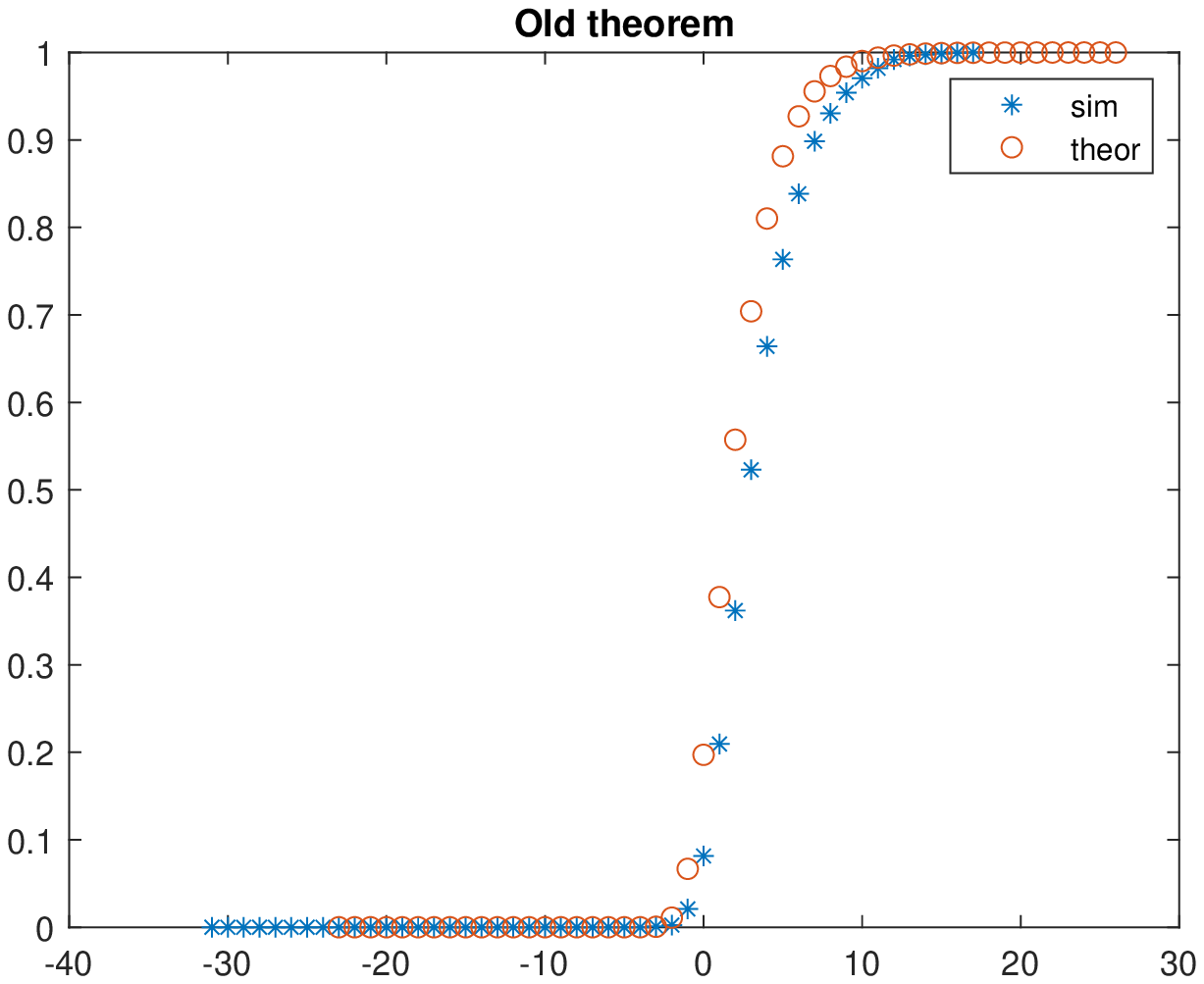}
        \caption*{(b) Longest run (old)}
    \end{subfigure}
    \begin{subfigure}[b]{0.30\textwidth}
        \includegraphics[width=\textwidth]{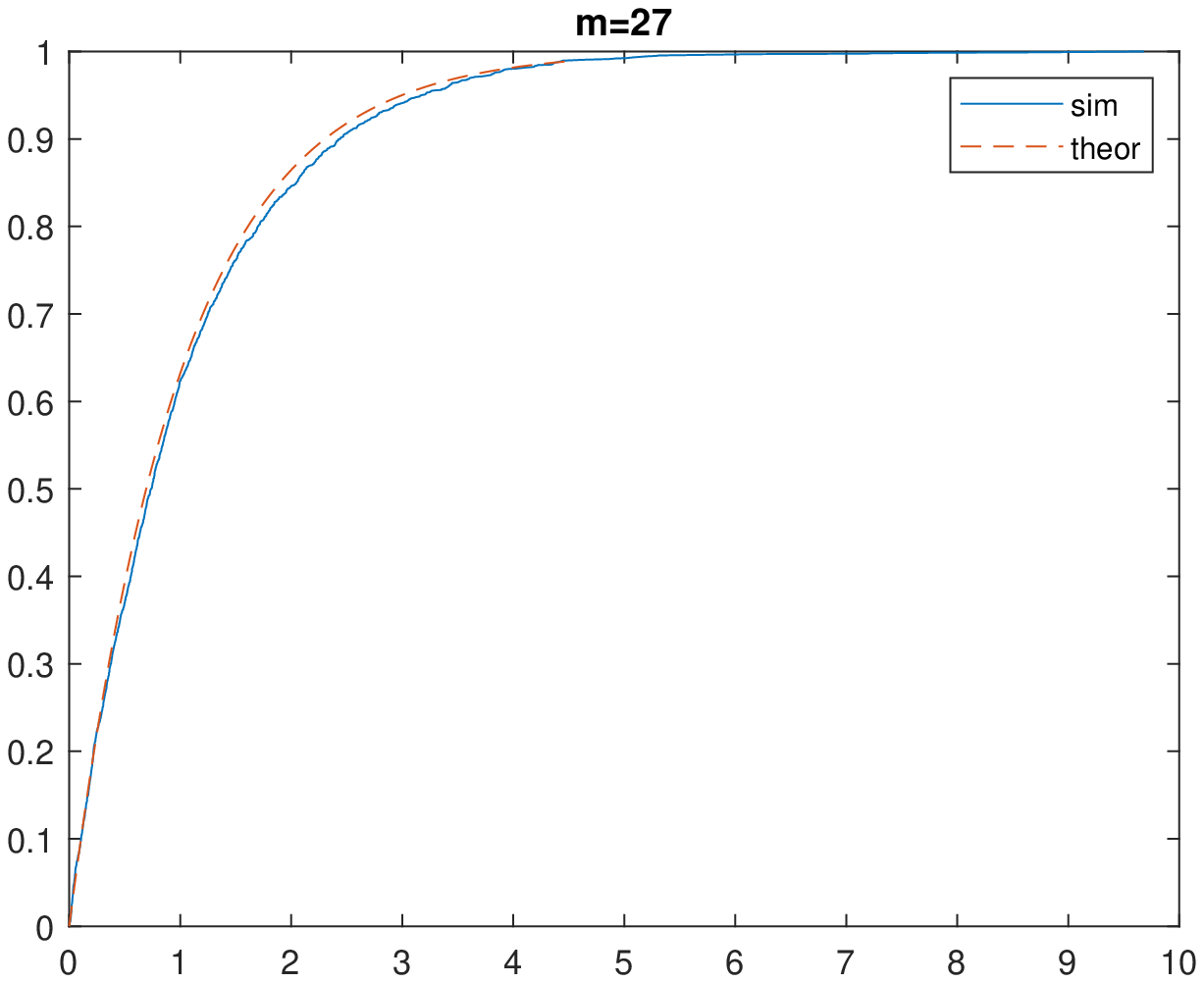}
         \caption*{(c) First hitting time }
    \end{subfigure}
    \caption{$T=2$, $p=0.6$, $N= 10^6$, $s=2000$}\label{TK2fig}
 \end{figure}   
\end{exmp}
\begin{exmp} \label{TK3}
In this example $p=0.4$, $T=2$.
The length of the coin tossing experiment is $N=10^6$, 
the number of the repetitions of the experiment is $s=500$.
Part (a) of Figure \ref{TK3fig} shows the fit of the empirical distribution of $\mu(N)$ to the asymptotic distribution
 given by our Theorem \ref{propRun}. The fit is good, the Kolmogorov distance is $0.0172$.
Part (b) of Figure \ref{TK3fig} shows the fit of the empirical distribution of $\mu(N)$ to the asymptotic distribution
 given by the old result quoted in Proposition \ref{GSW}. The fit is poor, the Kolmogorov distance is $0.1948$.
Part (c) of Figure \ref{TK3fig} shows the first hitting time of the $T$-contaminated run having length $n=17$.
The solid line is the empirical distribution given by the simulation, the dashed line is the asymptotic theoretical
 distribution presented in Theorem \ref{propT12km}. The fit is good.
\begin{figure}[h!]
    \centering
    \begin{subfigure}[b]{0.30\textwidth}
        \includegraphics[width=\textwidth]{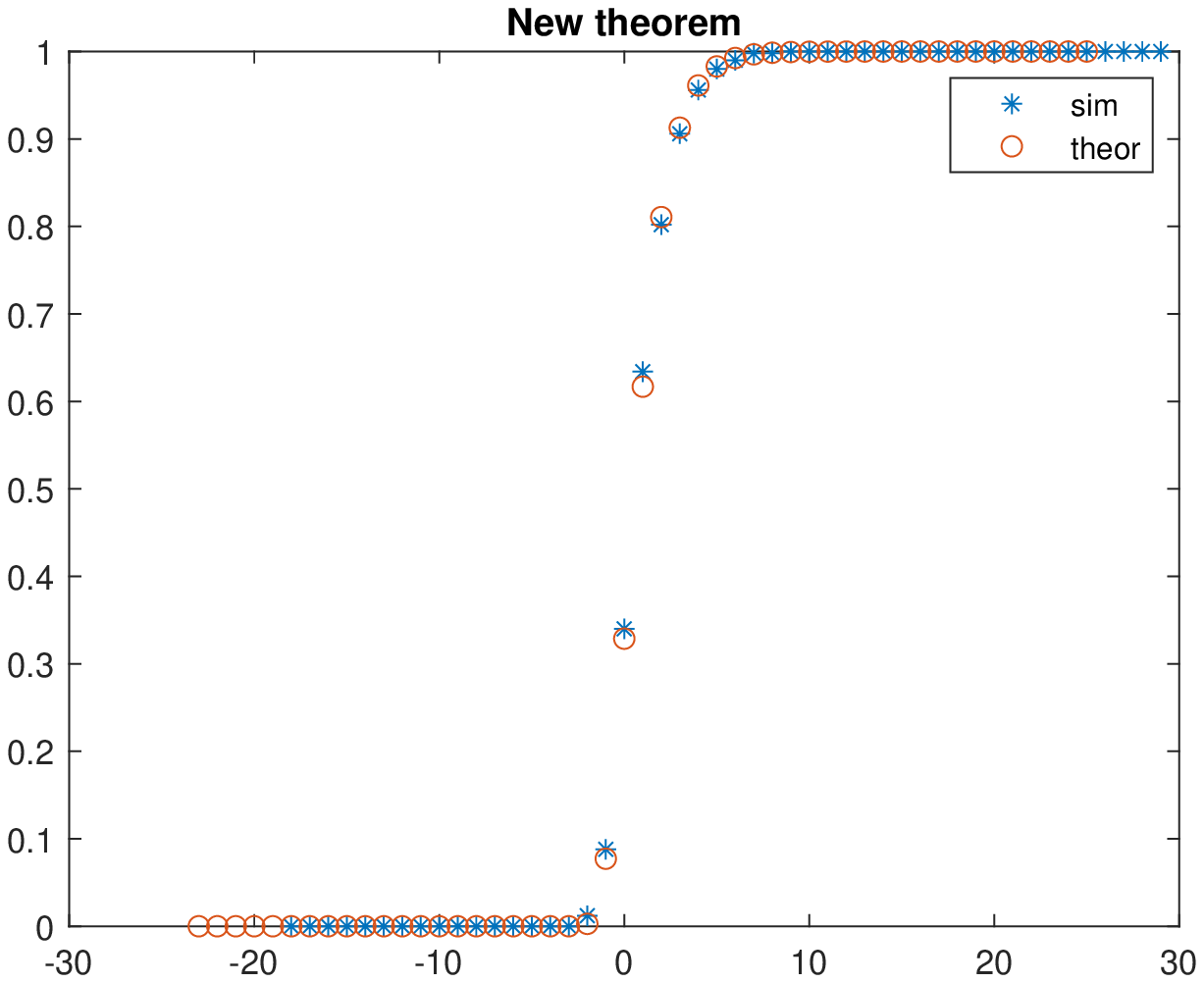}
        \caption*{(a) Longest run (new) }
    \end{subfigure}
    \begin{subfigure}[b]{0.30\textwidth}
        \includegraphics[width=\textwidth]{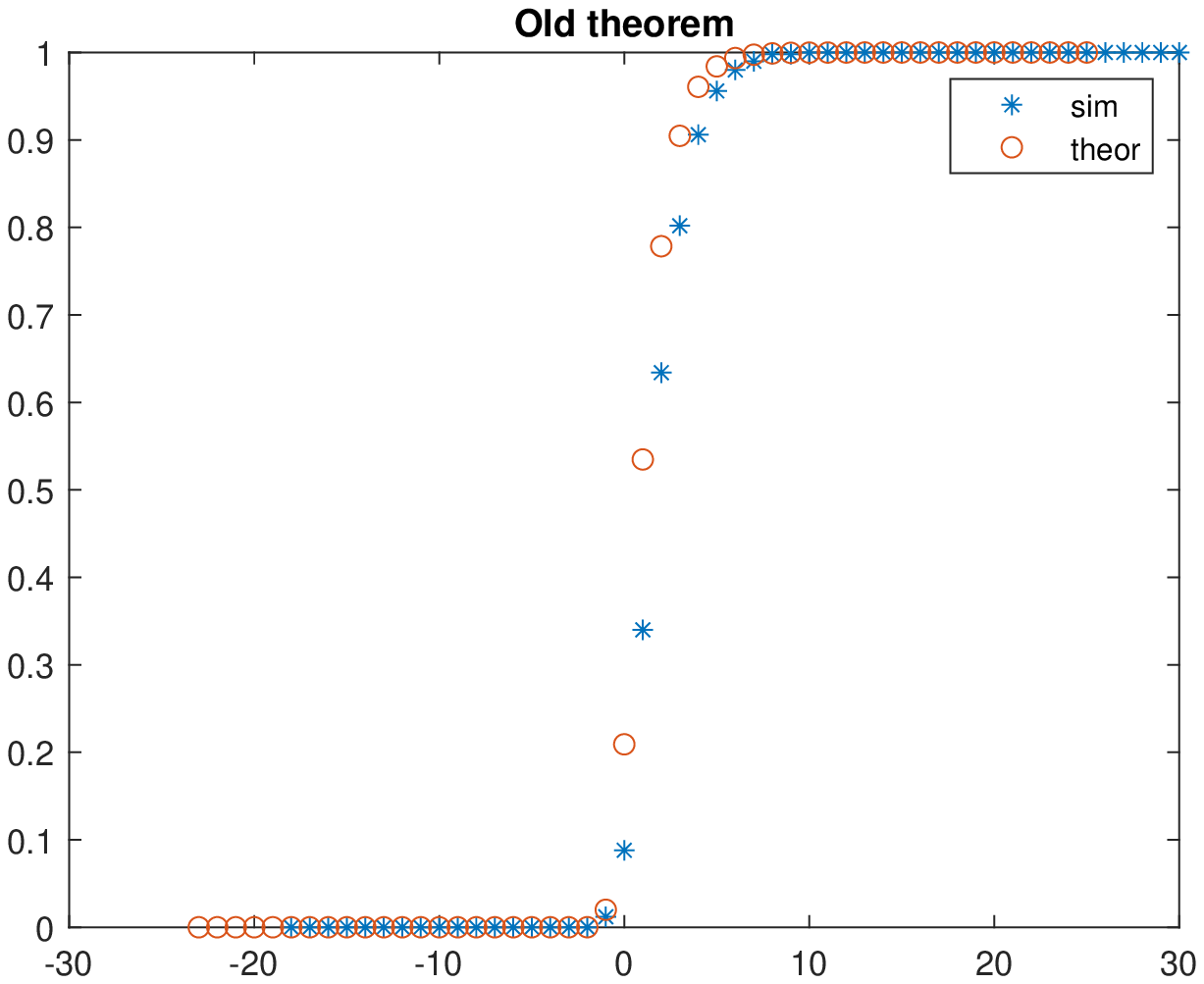}
        \caption*{(b) Longest run (old)}
    \end{subfigure}
    \begin{subfigure}[b]{0.30\textwidth}
        \includegraphics[width=\textwidth]{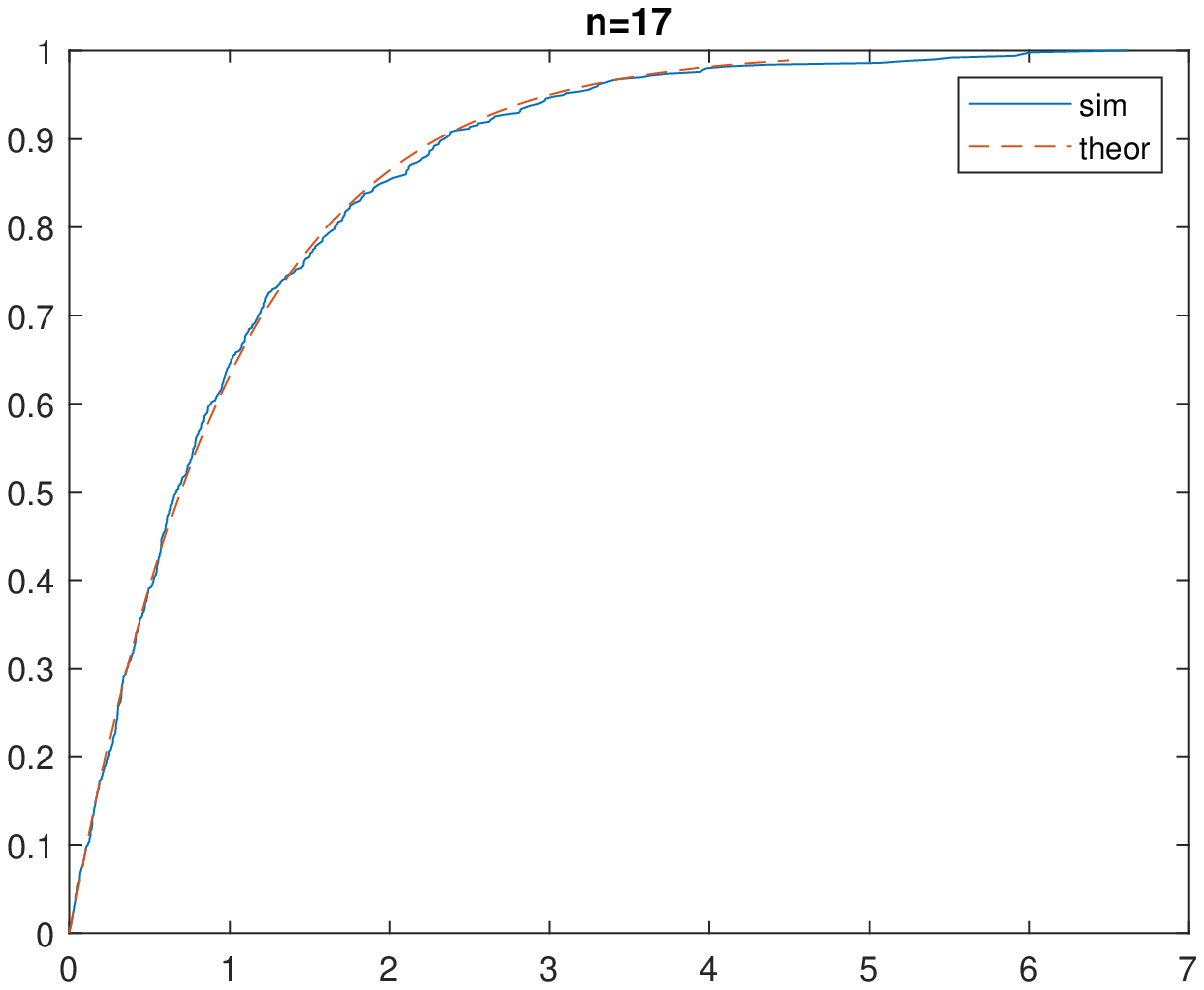}
         \caption*{(c) First hitting time }
    \end{subfigure}
    \caption{$T=2$, $p=0.4$, $N= 10^6$, $s=500$}\label{TK3fig}
 \end{figure}   
\end{exmp}
\bigskip

\end{document}